\numberwithin{equation}{section}
\newtheorem{theorem}{Theorem}[section]
\newtheorem{lemma}[theorem]{Lemma}
\newtheorem{definition}[theorem]{Definition}
\newtheorem{corollary}[theorem]{Corollary}
\newtheorem{question}[theorem]{Question}
\title{Cyclic Cellular Automata and Greenberg--Hastings Models on Regular Trees}
\author{Jason Bello and David Sivakoff}
\date{}
\begin{document}
\maketitle
\begin{abstract}

We study the cyclic cellular automaton (CCA) and the Greenberg--Hastings model (GHM) with $\kappa\ge 3$ colors and contact threshold $\theta\ge 2$ on the infinite $(d+1)$-regular tree, $T_d$. When the initial state has the uniform product distribution, we show that these dynamical systems exhibit at least two distinct phases. For sufficiently large $d$, we show that if $\kappa(\theta-1) \le d - O(\sqrt{d\kappa \ln(d)})$, then every vertex almost surely changes its color infinitely often, while if $\kappa\theta \ge d + O(\kappa\sqrt{d\ln(d)})$, then every vertex almost surely changes its color only finitely many times. Roughly, this implies that as $d\to \infty$, there is a phase transition where $\kappa\theta/d = 1$. For the GHM dynamics, in the scenario where every vertex changes color finitely many times, we moreover give an exponential tail bound for the distribution of the time of the last color change at a given vertex.

\end{abstract}

\section{Introduction}
The cyclic cellular automaton (CCA) and the Greenberg-Hastings model (GHM) are discrete models for excitable dynamical systems, which have been studied extensively on the integer lattices $\mathbb{Z}^d$, where they are known to exhibit a wide range of interesting behavior. Following the recent work of~\cite{gravner2018}, we study the CCA $(\xi_t)_{t\in\mathbb{Z}_{\ge 0}}$ and GHM $(\gamma_t)_{t\in\mathbb{Z}_{\ge 0}}$ dynamics on the infinite $(d+1)$-regular tree, $T_d$, with $\kappa\ge 3$ colors and contact threshold $\theta\ge 2$.

Formally, given an initial assignment of colors to the vertices of a graph $G = (V,E)$, $\xi_0\in\{0,1,\ldots,\kappa-1\}^{V}$, the CCA dynamics are determined by the rule 
\begin{align*}
\xi_{t+1}(v) =
\begin{cases}
 \xi_{t}(v)+1 \mod \kappa & \text{if } |\{u\sim v: \xi_t(u) =\xi_t(v)+1\mod \kappa\}| \geq \theta,\\
 \xi_{t}(v) &\text{otherwise,}
 \end{cases}
\end{align*}
for all $v\in V$, where $u\sim v$ means that $(u,v)\in E$, and $|S|$ denotes cardinality of a set $S$. In other words, if a vertex has at least $\theta$ neighbors that have color exactly one greater than it on the color wheel, then it is ``painted" by the larger color at the next time step. Another common metaphor used in CCA is the predator-prey relationship. Every color is an animal, and every animal has exactly one predator and exactly one prey. In this way, a prey will be ``eaten" (and consequently, replaced) by its predator if there are at least $\theta$ adjacent predators. This model was proposed by Bramson and Griffeath as the deterministic counterpart of the cyclic particle system~\cite{BandG89}.

The GHM dynamics were proposed by Greenberg and Hastings as a model for excitable media \cite{Greenberg1978}, and we accordingly refer to the sets of colors $\{0\}$, $\{1\}$, and $\{2, \dots, \kappa-1\}$ as resting, excited, and refractory states. Given an initial configuration $\gamma_0\in\{0,1,\ldots,\kappa-1\}^{V}$, the GHM dynamics are determined by the rule
$$
\gamma_t(v) =  
\begin{cases} 
\gamma_t(v) +1 \mod \kappa,  & \text{if } \gamma_t(v)\geq 1 \text{ or } |\{u\sim v: \gamma_t(u) =1\}| \geq \theta, \\
0 & \text{otherwise}
\end{cases}
$$
for all $v\in V$. That is, vertices at rest become excited if they have at least $\theta$ excited neighbors, otherwise they remain at rest. Once excited, a vertex enters a refractory period, which lasts $\kappa-2$ steps before the vertex returns to the resting state. In this paper we only considered initial configurations drawn from the uniform product measure, but some work has been done for $\kappa=3$ and $\theta=1$ on $\mathbb Z^d$ for $d\geq 2$ with regards to more general translation invariant product measures \cite{Durrett:1991ql}.

These seemingly simple local update rules lead to a range of complex phenomena, including spiral nucleation, periodicity, traveling waves and spatial clustering. In the present study, we focus on two distinct phenomena. We say that the CCA dynamics $(\xi_t)_{t\in \mathbb{Z}_{\ge0}}$ \textit{fixates} (on $G$) iff there exists a configuration $\xi_\infty \in \{0,1,\ldots, \kappa-1\}^{V}$ such that $\lim_{t\to \infty}  \xi_t(v) = \xi_\infty(v)$ for all  $v\in V$. Likewise, we say that the GHM dynamics $(\gamma_t)_{t\in \mathbb{Z}_{\ge0}}$ \textit{fixates} iff $\lim_{t\to \infty}  \gamma_t(v) = 0$ for all $v\in V$. Note that fixation does not imply that the limiting configuration is reached in finite time, but it does imply that each vertex changes color only finitely many times. We say that the vertex $v\in V$ \textit{fluctuates} for either the CCA or GHM dynamics iff it changes its color infinitely many times, and we say that the (CCA or GHM) dynamics \textit{fluctuates} iff every vertex fluctuates.

We assume that our graph $G$ is the infinite, undirected $(d+1)$-regular tree $T_d$, and we abuse notation by referring to both the graph and its vertex set by  $T_d$. We assume that $\xi_0$ and $\gamma_0$ are chosen randomly, and have the uniform product distribution on $\{0,\ldots, \kappa-1\}^{T_d}$, which we denote by $\mathbb P = \mathbb P_{\kappa,d}$. We use the term \textit{almost surely} to mean $\mathbb{P}$-almost surely. Our main results are the following two theorems.

\begin{theorem}\label{thm:fixation}
Assume $\kappa\ge 3$ and $\theta\ge 2$ and $d\ge 3$. If $\kappa\left(\theta-3\sqrt{d\ln(d)} \right) \geq d$, then CCA and GHM fixate on $T_d$ almost surely. Moreover, for the GHM dynamics, $(\gamma_t)$, if $\tau$ is the last time the root vertex is in the excited state, then for $n \ge 0$  
$$\mathbb P (\tau > n) \leq \left(\frac 1d\right)^{n}.$$
%
\end{theorem}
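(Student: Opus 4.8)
The plan is to reduce everything to a single estimate — the probability that the GHM run on an \emph{isolated} rooted subtree excites the root of that subtree at a prescribed large time — and then to sum over times. Two structural facts form the backbone. (i) \emph{Genealogy of excitations}: in the GHM one can only enter colour $1$ from colour $0$, so if $\gamma_t(v)=1$ with $t\ge1$ then $v$ was at rest at time $t-1$ with at least $\theta$ excited neighbours at time $t-1$; iterating this downward in time forces a branching family of excitations reaching all the way back to time $0$, where the bottom vertices satisfy $\gamma_0=1$. (ii) \emph{Subtree independence}: root $T_d$ at $\rho$, let $\Delta_u$ be the subtree of descendants of a vertex $u$ (so $u$ has $d$ children in $\Delta_u$), and let $\gamma^{\Delta_u}$ be the GHM on $\Delta_u$ alone. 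A quick induction shows that while the parent of $u$ has not yet been excited, $\gamma$ and $\gamma^{\Delta_u}$ coincide on $\Delta_u$: the two can differ only at $u$, and only at a moment when $u$ is at rest and its parent is excited. Since the subtrees hanging off the neighbours of $\rho$ are vertex-disjoint, the events $\{u\text{ excited at time }j\text{ in }\gamma^{\Delta_u}\}$ are independent over those neighbours.

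I would then set $\pi_j:=\mathbb P(u\text{ excited at time }j\text{ in }\gamma^{\Delta_u})$ and, combining one step of (i) with (ii), derive a recursion bounding $\pi_j$ by binomial tails $\mathbb P(\mathrm{Bin}(d,\pi_i)\ge\theta)$ for $i<j$ (plus a term accounting for $u$ having been excited before), and likewise bound $\mathbb P(\rho\text{ excited at time }t)$ by $\mathbb P(\mathrm{Bin}(d+1,\pi_{t-1})\ge\theta)$ plus an analogous term. This is where the hypothesis enters: $\kappa(\theta-3\sqrt{d\ln d})\ge d$ says $\theta$ exceeds the binomial mean $(d+1)/\kappa$ by roughly $3\sqrt{d\ln d}$, so a Chernoff--Bernstein bound gives $\mathbb P(\mathrm{Bin}(d+1,1/\kappa)\ge\theta)\le d^{-c}$ with $c$ large, and comfortably so, since the same remains true with the density inflated slightly. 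Feeding this into the recursion forces $\pi_j$, and hence $\mathbb P(\rho\text{ excited at time }t)$, to decay geometrically at a rate far below $1/d$; summing over $t>n$ gives $\mathbb P(\tau>n)\le(1/d)^n$ after bookkeeping, and summability over all $t$ together with Borel--Cantelli yields almost sure fixation of the GHM. The CCA statement follows from the analogous genealogy and subtree argument, now tracking the relevant colour rather than the excited state.

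The hard part is that the GHM is not monotone, so the cut-at-$\rho$ identification is exact only while $\rho$ is unexcited: an excitation of $\rho$ (or of an internal subtree root) at an earlier time perturbs the downstream dynamics thereafter, and the crude version of this perturbation contributes a constant-order error (of size $\sim1/\kappa$, from $\gamma_0(\rho)=1$) that would accumulate over $t$ and wreck the decay. Controlling it — e.g.\ by conditioning on the first excitation time of the offending vertex and restarting the subtree dynamics from the configuration it leaves behind, after which that vertex is out of the picture, or by a tighter genealogical accounting that keeps the error terms from summing — is the delicate point, and it is also where care is needed to land on exactly $(1/d)^n$ rather than merely $Cr^{n}$ for some $r<1$.
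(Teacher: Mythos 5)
Your plan is a genuinely different route from the paper's, and it has two concrete gaps. First, the CCA half of the theorem cannot be dispatched by "the analogous genealogy, now tracking the relevant colour." The backward-in-time branching structure you describe is special to GHM: excitation requires the resting state, which forces a full $\theta$-ary tree of depth $t$ of excitations reaching back to time $0$ (this is exactly Lemma~\ref{lem:GHM excited}, which the paper uses only for Theorem~\ref{thm:GHM small theta}, under a much stronger hypothesis $\kappa \gtrsim de/\theta$). In CCA there is no resting state: a neighbour contributing to a colour change of $v$ at time $t$ may simply have held the required colour since time $0$, or have advanced through several colours at irregular times, so the genealogy of a late colour change is a web with long waiting chains rather than a depth-$t$ tree, and the union bound over such histories is not controlled by the hypothesis $\kappa(\theta-3\sqrt{d\ln d})\ge d$. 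Second, for GHM your recursion as stated does not close. Writing $\pi_j$ for the probability that $u$ is excited at time $j$ in the isolated subtree dynamics, the coupling of a child's subtree with its own isolated dynamics is only valid up to the first excitation of $u$, so the honest recursion is $\pi_j\le \mathbb P(\mathrm{Bin}(d,\pi_{j-1})\ge\theta)+\mathbb P(u\text{ excited at some time}<j-1)$, and the second term is at least $\pi_0=1/\kappa$ for every $j$ --- the constant-order error you identify. The proposed repairs (condition on the first excitation time and restart) run into the problem that the post-excitation configuration is no longer a product measure, so the base case $\pi_0=1/\kappa$ and the independence across children both fail after the restart; this is not bookkeeping but the crux, and it is left unproved.

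For contrast, the paper avoids the non-monotonicity and re-excitation issues entirely by working with structures defined purely in terms of $\xi_0$ (resp.\ $\gamma_0$): a \emph{rigid $(\theta-2)$-fort} is a connected set in which no child's initial colour is one more than its parent's and every vertex has at most $\theta-2$ neighbours outside the set. Such a set is deterministically frozen for CCA and never excited after time $0$ for GHM (Lemma~\ref{lem:rigid k-fort}), with no forward-in-time analysis needed. A Galton--Watson fixed-point argument plus the Chernoff bound (Lemmas~\ref{cdf bounds} and~\ref{rigid subtree}) shows the root of a $d$-ary tree lies in such a fort except with probability $\le d^{-2}$ --- this is where the hypothesis $\kappa(\theta-3\sqrt{d\ln d})\ge d$ enters, playing roughly the role you assign to it but applied to the fort offspring distribution $\mathrm{Binom}(d,(1-d^{-2})(1-1/\kappa))$ rather than to excitation counts. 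A union bound over the $(d+1)d^n$ non-backtracking paths of length $n$ then shows $\rho$'s component in the complement of the forts has depth $>n$ with probability $\le d^{-n}$ (Lemma~\ref{path rigid}), which is where the clean $(1/d)^n$ comes from; and finally CCA fixates on the resulting finite tree because $\deg\le\kappa(\theta-1)$ forbids fluctuation of a deepest non-leaf, while GHM on a finite tree of depth $m$ has no excitations after time $m+1$ (Lemma~\ref{fix lemma}). If you want to salvage your forward-in-time approach, it would at best reprove the GHM half under a stronger hypothesis; to reach the stated condition and to cover CCA you need a structural argument of the fort type.
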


\begin{theorem}\label{thm:fluctuation}
Assume $\kappa\ge 3$ and $\theta\ge 2$ and $d\ge 4$. If $\kappa (\theta-1)  \leq d - \sqrt{6d\kappa\ln(d)}$, then the CCA and GHM dynamics fluctuate on $T_d$ almost surely, and $\displaystyle \lim_{n\to\infty} \xi_{n\kappa}$ and $\displaystyle\lim_{n\to\infty} \gamma_{n\kappa}$ exist almost surely.
\end{theorem}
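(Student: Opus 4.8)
\emph{Setup: driven subtrees.} The plan is to build everything from a self-similar ``driving'' structure. Work with a rooted copy of the tree in which every vertex has exactly $d$ children. For $a\in\mathbb Z_\kappa$, call such a subtree \emph{$a$-driven} if its root starts with colour $a$ and at least $\theta$ of its $d$ child-subtrees are $(a{+}1)$-driven; a one-line induction then shows that, no matter what the root's parent does, an $a$-driven subtree has its root at colour $a+t\bmod\kappa$ at every time $t$ (the $\theta$ $(a{+}1)$-driven children always display the root's current colour plus one --- both for CCA and, at the moments the root is resting, for GHM, since $(a{+}1)$-driven children are then excited). By symmetry and independence of disjoint subtrees, $p:=\mathbb P(a\text{-driven})$ is independent of $a$ and satisfies
\begin{equation*}
p=\tfrac1\kappa\,\mathbb P\big(\mathrm{Bin}(d,p)\ge\theta\big);
\end{equation*}
indeed $p$ is the decreasing limit of $q_0=1/\kappa$, $q_{n+1}=\tfrac1\kappa\mathbb P(\mathrm{Bin}(d,q_n)\ge\theta)$ (the probability of being ``$a$-driven through time $n$''), so $p>0$ as soon as $\tfrac1\kappa\mathbb P(\mathrm{Bin}(d,p_0)\ge\theta)\ge p_0$ for a single $p_0\in(0,1/\kappa)$.

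\emph{Positivity of $p$.} I would take $p_0=(1-\delta)/\kappa$ with $\delta$ of order $1/d$, so that the mean $\mu=d(1-\delta)/\kappa$ of $\mathrm{Bin}(d,p_0)$ exceeds $\theta$. The hypothesis $\kappa(\theta-1)\le d-\sqrt{6d\kappa\ln d}$ forces $d>6\kappa\ln d$, whence $\mu-\theta$ is of order at least $\sqrt{d\ln d/\kappa}$; a Chernoff lower-tail bound $\mathbb P(\mathrm{Bin}(d,p_0)<\theta)\le\exp\!\big(-(\mu-\theta)^2/(2\mu)\big)$ then yields an exponent exceeding $\ln(1/\delta)$, so $\mathbb P(\mathrm{Bin}(d,p_0)\ge\theta)\ge 1-\delta=\kappa p_0$ and $p\ge p_0>0$. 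The stated constant $6$ and the gap between $\theta-1$ and $\theta$ supply the slack that makes this inequality honest; this is the routine, if somewhat delicate, computation of the argument.

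\emph{From driven subtrees to steady advancement.} Fix a vertex $v$ with its $d+1$ neighbours' subtrees $T_1,\dots,T_{d+1}$; these are independent, and each is $a$-driven with probability $p$ for each $a$, hence driven for some phase with probability $\kappa p=\mathbb P(\mathrm{Bin}(d,p)\ge\theta)\ge 1-\delta$. Since the hypothesis gives $d+1>\kappa(\theta-1)$, whenever more than $\kappa(\theta-1)$ of the $T_i$ are driven some phase $a$ is shared by at least $\theta$ of them; those $\theta$ subtrees display colour $a+t$ at every time $t$, so $v$ --- whatever its other neighbours do --- advances at least once in every window of $\kappa$ steps and, as soon as its colour lines up one behind this group (which happens within $\kappa$ steps), advances at every subsequent step. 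Thus on this event $v$ fluctuates, and moreover $\xi_{n\kappa}(v)$ (resp.\ $\gamma_{n\kappa}(v)$, once $v$ has reset into a period-$\kappa$ loop) is eventually constant; and this event has probability at least $\mathbb P(\mathrm{Bin}(d+1,\kappa p)>\kappa(\theta-1))>0$.

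\emph{Upgrading to almost sure --- the main obstacle.} The third paragraph only yields a positive, not a full, probability, and closing this gap is the crux. The route I would pursue is to pass from ``$a$-driven'' to \emph{eventually driven}: there exist a finite time $t_0$ and a phase $a$ with the root following colour $a+t$ for all $t\ge t_0$, irrespective of the parent. This event is monotone under allowing a later $t_0$, it contains the $a$-driven events (so has probability $\ge p>0$), and it satisfies a self-consistent tree recursion --- if enough of a vertex's child-subtrees are eventually driven then, by the pigeonhole step above, $\theta$ of them eventually share a phase and force the vertex to settle into steady advancement. One then wants to show this recursion is supercritical, so that the eventually-driven probability equals $1$; granting that, almost surely all $d+1$ of any vertex's neighbour-subtrees are eventually driven, the pigeonhole step ($d+1>\kappa(\theta-1)$) applies, every vertex is eventually steadily advancing, and the conclusions --- fluctuation and the existence of $\lim_n\xi_{n\kappa}$ and $\lim_n\gamma_{n\kappa}$ --- follow. (Equivalently, ``every vertex fluctuates'' is $\mathrm{Aut}(T_d)$-invariant, so by ergodicity of the uniform product measure it has probability $0$ or $1$ and only positivity need be shown.) I expect the real work to lie in making the supercriticality precise --- setting up the recursion so that the parent-independence is genuinely uniform, and handling the accompanying concentration estimate --- particularly in the near-critical regime $\kappa\theta\approx d$, where the pigeonhole threshold sits close to the mean number of same-phase children and the slack in the hypothesis is being used to the fullest.
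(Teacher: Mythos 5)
Your first three paragraphs are essentially the paper's own argument in different clothing: an ``$a$-driven'' subtree is exactly an infinite $\theta$-ary rainbow subtree rooted at a vertex of colour $a$ (Definition~\ref{rainbow def}, Lemma~\ref{lem:rainbow flux}); your fixed-point relation $p=\frac1\kappa\,\mathbb P(\mathrm{Bin}(d,p)\ge\theta)$ is the paper's $B_2$ recursion; and your Chernoff step is Lemma~\ref{rainbow subtree} (the paper takes the analogue of your $\delta$ to be $d^{-2}$ and obtains $\kappa p\ge 1-d^{-2}$). The pigeonhole-plus-catch-up argument of your third paragraph also appears verbatim in the paper's proof of Theorem~\ref{thm:fluctuation}. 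The genuine gap is your fourth paragraph: the theorem is an almost-sure statement and neither of your proposed upgrades closes it. The ergodicity shortcut does not work as stated: the $\mathrm{Aut}(T_d)$-invariant event is ``every vertex fluctuates'', and the zero--one law only reduces the theorem to showing that this \emph{global} event has positive probability; your third paragraph bounds the probability for one fixed vertex, and a per-vertex bound $1-\delta$ with $\delta>0$ gives nothing about the intersection over infinitely many vertices --- since the global event has probability $0$ or $1$, proving its positivity is exactly as hard as the theorem itself. The ``eventually driven'' recursion is likewise only a plan: that event is determined neither by a finite ball nor by finite time, so there is no truncation scheme identifying its probability with a fixed point of your map; the pigeonhole step only yields an inequality $q\ge\mathbb P(\mathrm{Bin}(d,q)>\kappa(\theta-1))$, not an equation; and supercriticality of such a recursion would in any case deliver positivity, not probability $1$.

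What the paper does instead --- and what is missing here --- is a percolation-style argument that never needs a probability-one recursion at a single vertex. Lemma~\ref{rainbow path} shows that almost surely \emph{every} infinite non-backtracking path from $\rho$ meets an infinite $\theta$-ary rainbow subtree: for a fixed path the events $F_k$ (vertex $v_k$ roots a rainbow subtree disjoint from the path) are independent, giving a bound of order $(2/d^2)^{n+1}$, a union bound over the $(d+1)d^n$ paths of length $n$ gives $(2/d)^n$, and Borel--Cantelli finishes. Consequently the connected component $C_\rho$ of $\rho$ in the complement of the set $A$ of eventually-always-advancing vertices is finite a.s., and then your own catch-up argument, applied to a \emph{leaf} of $C_\rho$ (all $d$ of whose children lie in $A$, using $\kappa(\theta-1)<d$), forces that leaf into $A$, a contradiction, so $C_\rho=\emptyset$. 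Some version of this ``every path hits a rainbow subtree, hence the bad component is finite, hence empty'' step is required; without it your argument proves only that each fixed vertex fluctuates with probability at least $1-O(d^{-1})$, not almost surely.
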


For Theorem~\ref{thm:fixation}, $d\ge 3$ is sufficiently large for the estimates in our proofs to hold, but when $d=3$, for instance, we need $\theta\ge 6$ for the left side of the condition to be non-negative. When $\theta>d$, fixation is trivial to prove for all $\kappa\ge 3$, so the condition in Theorem~\ref{thm:fixation} is only nontrivial when there exist values of $\theta$ for which $3\sqrt{d\ln d}< \theta \le d$, which requires $d\ge 31$ (and also $\theta\ge 31$).  For smaller values of $\theta$ and $d$, we prove the following sufficient conditions for fixation.

\begin{theorem}\label{thm:CCA small theta}
If $d\ge\theta\ge 3$ and $\kappa\ge 3$ and $\kappa(\theta-2)\ge 9e d^{1 + \frac1{\theta-2}}$, then the CCA dynamics, $(\xi_t)$, fixates on $T_d$ almost surely. If $d\ge \theta = 2$ and $\kappa\ge 12 d^3$, then the CCA dynamics fixates on $T_d$ almost surely.
\end{theorem}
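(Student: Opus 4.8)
The plan is to run a backward‑in‑time \emph{witness tree} argument. First I would establish the structural lemma: if a vertex $v$ changes colour and $c=\xi_0(v)+1$ is the colour it first moves to, then at the step before this firing $v$ has at least $\theta$ neighbours of colour $c$, and each such neighbour either has carried colour $c$ since time $0$ --- so its initial colour is exactly $c$ --- or most recently acquired colour $c$ through its own firing at a strictly earlier time. Recursing into the latter vertices and stopping at the former produces, since firing times strictly decrease, a \emph{finite} subtree $W\subseteq T_d$ rooted at $v$ and contained in the subtree of $T_d$ below $v$, in which every internal vertex has at least $\theta-1$ children (the root at least $\theta$) and every leaf at depth $\ge 1$ has a prescribed initial colour. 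Running the same construction through all of $v$'s first $j$ firings at once yields the union of $j$ such trees rooted at $v$; since a leaf never changes its colour, the leaf sets of the sub‑trees recording firings to distinct colours are disjoint, so this union carries at least $\min(j,\kappa)(\theta-1)$ distinct colour‑constrained leaves.

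Next I would run a first‑moment bound over embeddings of these structures into $T_d$, where the weight of an embedded structure with $N$ colour‑constrained leaves is $\kappa^{-N}$ (the probability those leaves get their prescribed colours), with an extra factor $\kappa$ for $\xi_0(v)$. Indexing by $N$: an $N$‑leaf tree whose internal vertices have $\ge\theta-1$ children has at most $(N-1)/(\theta-2)$ internal vertices, and each contributes a factor $\binom{d}{\theta-1}$ for choosing its children, so the number of such embedded structures is at most $(\mathrm{const})^N\binom{d}{\theta-1}^{N/(\theta-2)}$. The resulting geometric series converges, and is small, exactly when $\kappa\gtrsim\bigl(d^{\theta-1}/(\theta-1)!\bigr)^{1/(\theta-2)}$, which by Stirling is the hypothesis $\kappa(\theta-2)\gtrsim e\,d^{1+1/(\theta-2)}$; the factor $9$ absorbs the crude bounds, and the case $\theta=2$ (where the witness trees degenerate into chains and need a separate estimate) accounts for the different bound $\kappa\ge 12d^3$. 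In particular, feeding in the $j$‑firing version with $N\ge\kappa(\theta-1)$, the quantity $\varepsilon:=\mathbb{P}(v\text{ changes colour}\ge\kappa\text{ times})$ is extremely small.

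To upgrade this to fixation it suffices to prove $\mathbb{P}(v\in\mathcal{A}_\infty)=0$ for each $v$, where $\mathcal{A}_\infty$ is the countable set of vertices that fluctuate, since a union bound then gives $\mathbb{P}(\mathcal{A}_\infty\neq\emptyset)=0$. The crux is that $\mathcal{A}_\infty$ is internally rich: if $v\in\mathcal{A}_\infty$ then $v$ fires to every colour infinitely often, whereas a vertex outside $\mathcal{A}_\infty$ is eventually frozen at one colour, so if $v$ had at most $2$ neighbours in $\mathcal{A}_\infty$, then for every colour $c$ at least $\theta-2$ of $v$'s eventually‑frozen neighbours would have colour $c$, forcing $\kappa(\theta-2)\le d$ and contradicting the hypothesis; hence every vertex of $\mathcal{A}_\infty$ has at least $3$ neighbours in $\mathcal{A}_\infty$. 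So a nonempty $\mathcal{A}_\infty$ contains an infinite subtree of $T_d$ of minimum degree $\ge 3$, and therefore an infinite binary subtree all of whose vertices lie in $\mathcal{A}_\infty$ and in particular change colour at least $\kappa$ times. For such a subtree truncated at depth $m$, the $2^m$ pairwise‑incomparable vertices at depth $m$ have their ``changes colour $\ge\kappa$ times'' witnessed inside pairwise disjoint subtrees of $T_d$, hence independently, each with probability $\le\varepsilon$; bounding the shapes and embeddings of the depth‑$m$ subtree by $\bigl(C(d+1)^2\bigr)^{2^m}$ gives $\mathbb{P}(v\in\mathcal{A}_\infty)\le\bigl(C(d+1)^2\varepsilon\bigr)^{2^m}\to 0$.

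I expect the hardest part, and the source of the precise exponent, to be the first‑moment step: the witness trees cannot be truncated at a fixed depth (a vertex's first change may occur arbitrarily late), which forces one to track all early firings at once and to extract independence purely from the disjointness of leaf sets across colours; and the per‑vertex probability one can harvest this way is only of order $\bigl(d^{\theta-1}/(\theta-1)!\bigr)^{-1/(\theta-2)}$, so the threshold is necessarily $\kappa(\theta-2)\asymp d^{1+1/(\theta-2)}$ rather than $\kappa(\theta-2)\asymp d$; carrying the binomial coefficient (not just $d^{\theta-1}$) through the sum with the right constant is where most of the book‑keeping lies. A secondary delicate point is the large‑$\theta$ regime, where $\kappa$ may be below $d-\theta+2$ and the frozen‑neighbour count in the last step must be handled carefully, together with the separate, chain‑dominated analysis when $\theta=2$.
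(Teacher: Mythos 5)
Your route is genuinely different from the paper's: the paper never analyzes the dynamics backward in time, but instead locates \emph{static} stable structures in $\xi_0$ (strongly rigid $(\theta-1)$-forts, Definition~\ref{strongly rigid} and Lemma~\ref{lem:strongly rigid k-fort}), shows via a Galton--Watson fixed-point argument that every infinite non-backtracking path from $\rho$ hits one almost surely (Lemmas~\ref{strongly rigid subtree} and~\ref{path strongly rigid}), and then invokes the deterministic fact that CCA fixates on any finite tree with $\deg\le\kappa(\theta-1)$ (Lemma~\ref{fix lemma}). Your witness-tree/first-moment scheme plus the ``$\mathcal{A}_\infty$ has internal minimum degree $3$'' argument is an interesting alternative, and your heuristic for where the exponent $d^{1+1/(\theta-2)}$ comes from matches the paper's Lemma~\ref{binomial small theta}. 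However, as written it has gaps that go beyond bookkeeping.

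First, the $\theta=2$ case fails outright in both halves of your argument. The first-moment step: with $\theta=2$ an internal vertex of a witness tree is only guaranteed $\theta-1=1$ child-witness, so the witness structures degenerate to chains carrying a single colour-constrained leaf, and the entropy/probability sum is of order $\sum_\ell d^\ell\kappa^{-1}=\infty$; no choice of $\kappa$ rescues this without extracting additional constraints from the internal vertices (the paper instead works with strongly rigid $(d-1)$-ary subtrees attached to \emph{pairs} of adjacent path vertices, precisely to beat the $d^\ell$ entropy). The upgrade step: your frozen-neighbour count gives $\ge\theta-k$ frozen witnesses of each colour when $v$ has $k$ fluctuating neighbours, so for $\theta=2$ you only conclude that every vertex of $\mathcal{A}_\infty$ has at least $2$ (not $3$) neighbours in $\mathcal{A}_\infty$; this yields an infinite path, whose vertices are pairwise comparable, and your independence-over-incomparable-vertices argument then has nothing to bite on. Second, even for $\theta\ge3$, your claim that the witness tree is ``contained in the subtree of $T_d$ below $v$'' is false --- the $\ge\theta$ witnesses of a firing may include the parent --- and this matters exactly where you need it, namely to make the events at the $2^m$ incomparable vertices measurable with respect to disjoint sets of initial colours; it is fixable by discarding one witness at \emph{every} vertex including the root, but you must then recheck that $\kappa(\theta-1)$ leaves survive. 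Finally, the step you yourself flag as hardest --- enumerating the combined $\kappa$-colour witness structure sharing the root with explicit constants so that the geometric ratio stays below $1$ and $\varepsilon\cdot C(d+1)^2<1$ in all regimes (including $\theta$ close to $d$, where $\kappa$ may be of constant order and the minimal structure is forced to be deep) --- is asserted rather than carried out, so the proposal does not yet constitute a proof even for $\theta\ge 3$.
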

It is clear from our proof that the condition in Theorem~\ref{thm:CCA small theta} also guarantees fixation for GHM, but for GHM dynamics we can use a simple first-moment argument to give the following improved sufficient condition for fixation.

\begin{theorem}\label{thm:GHM small theta}
Assume $d\ge \theta\ge 2$ and $\kappa\ge 3$. If $\kappa\ge e\left(\frac{de}{\theta}\right)^{1+ 1/(\theta^\kappa - 1)}$, then the GHM dynamics, $(\gamma_t)$, fixates on $T_d$ almost surely. Moreover,  if $\tau$ is the last time the root vertex is in the excited state, for $n\ge \kappa$ we have
$$\mathbb P (\tau > n) \leq 2{d+1\choose \theta} \exp\left[-\theta^{n-\kappa+1}\right].$$
Also, for $\theta=2$ and $\theta=3$ and some small values of $d$, the GHM dynamics fixates on $T_d$ almost surely for all $\kappa$ greater than or equal to the values given in the table below.
\begin{equation*}
\begin{array}{|c||c|c|c|c|c|c|c|c|}
\hline d& 2 & 3 & 4 & 5 & 6 & 7 & 8 & 9 \\ \hline\hline
\theta=2 & 3 & 5 & 7& 8 & 10 &11 &12 & 14 \\ \hline
\theta=3 & 3 & 3 & 3& 4 & 5 & 5& 6 & 7 \\ \hline
\end{array}
\end{equation*}
For $\theta=d\ge 2$, the GHM dynamics fixates almost surely for all $\kappa\ge 3$.
\end{theorem}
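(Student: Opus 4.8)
The plan is to prove fixation by bounding, for a fixed vertex $v$ (say, the root), the probability $\mathbb{P}(\gamma_t(v)=1)$ that $v$ is excited at time $t$, and summing over $t$. The basic object is the \emph{history tree} of an excitation: if $\gamma_t(v)=1$ with $t\ge 1$ then $v$ has at least $\theta$ neighbours that were excited at time $t-1$, so we select $\theta$ of them; each is excited at time $t-1$ and (if $t-1\ge 1$) in turn had $\theta$ excited neighbours at time $t-2$, while a vertex excited at time $0$ is simply one with initial state $1$. Iterating produces a complete $\theta$-ary tree of depth $t$ whose nodes are (vertex, time) pairs, the depth-$\ell$ node sitting at time $t-\ell$, whose $\theta^t$ leaves lie at time $0$ and are initially excited. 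The structural fact that drives everything, and where I would use $\kappa\ge 3$ in an essential way, is that in GHM the refractory period forbids a vertex from being excited at two times differing by less than $\kappa$; hence in $T_d$ no root-to-leaf trace immediately backtracks, so each trace is a self-avoiding path, and two traces that separate at a branch node remain in disjoint subtrees. Consequently the history tree embeds \emph{injectively} into $T_d$: all of its vertices are distinct. (This is exactly the extra structure that GHM has over CCA and that makes the improved bound possible.)

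Next comes the first-moment (union) bound. The number of shapes the history tree can take inside $T_d$ is $N_t=\binom{d+1}{\theta}\binom{d}{\theta}^{(\theta^t-\theta)/(\theta-1)}$: a factor $\binom{d+1}{\theta}$ for the root's $\theta$ excited neighbours and a factor $\binom{d}{\theta}$ for each of the $(\theta^t-\theta)/(\theta-1)$ non-root internal nodes, whose parent direction is excluded (again by the refractory rule). For a fixed shape, being realised forces two \emph{independent} families of conditions on initial states, independent because all the vertices involved are distinct: (i) each of the $\theta^t$ leaves starts in state $1$, contributing $\kappa^{-\theta^t}$; and (ii) for $1\le j\le\kappa-1$ a node at time $j$ cannot have been excited before time $j$ (the refractory rule would force a negative excitation time), so its state at time $j-1$ is a deterministic function of its initial state, and the requirement that this state equal $0$ confines the initial state to the set $\{0\}\cup\{\kappa-j+1,\dots,\kappa-1\}$ of exactly $j$ values, contributing a factor $j/\kappa$ per node over the $\theta^{t-j}$ nodes at time $j$. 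This yields
$$\mathbb{P}\bigl(\gamma_t(v)=1\bigr)\ \le\ N_t\,\kappa^{-\theta^t}\prod_{j=1}^{\min(\kappa-1,\,t-1)}\Bigl(\tfrac{j}{\kappa}\Bigr)^{\theta^{t-j}}.$$
Ignoring lower-order factors, the right-hand side behaves like $(\mathrm{base})^{\theta^t}$ where $\mathrm{base}=\binom{d}{\theta}^{1/(\theta-1)}\,\kappa^{-\sum_{j=0}^{\kappa-1}\theta^{-j}}\,e^{O(1)}$ and $\sum_{j=0}^{\kappa-1}\theta^{-j}=(\theta^\kappa-1)/((\theta-1)\theta^{\kappa-1})$; it is precisely the refractory bonus (ii) that raises the exponent of $\kappa$ from $1$, which is all (i) alone gives, to this value.

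To finish, substitute $\binom{d}{\theta}\le(de/\theta)^\theta$: a short computation shows $\mathrm{base}<e^{-\theta^{-\kappa}}$ whenever $\kappa\ge e(de/\theta)^{1+1/(\theta^\kappa-1)}$. Under that hypothesis $\mathbb{P}(\gamma_t(v)=1)$ is doubly-exponentially small in $t$, the sum over $t>n$ is dominated by its first term, and one obtains $\mathbb{P}(\tau>n)=\sum_{t>n}\mathbb{P}(\gamma_t(v)=1)\le 2\binom{d+1}{\theta}\exp[-\theta^{n-\kappa+1}]$ for $n\ge\kappa$. In particular $\mathbb{P}(\tau=\infty)=0$, so $v$ is excited only finitely often; since after its final excitation $v$ completes its cycle and then rests forever, $\gamma_t(v)\to 0$, and a union bound over the countably many vertices gives fixation almost surely. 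The case $\theta=d$ is easy from the same bound: then $\binom{d}{\theta}=1$, the shape of the history tree below the root is \emph{forced}, $N_t=d+1$, and $\mathbb{P}(\gamma_t(v)=1)\le(d+1)\kappa^{-d^t}$ is summable for every $\kappa\ge 2$, so fixation holds for all $\kappa\ge 3$. The table for $\theta\in\{2,3\}$ and small $d$ is obtained by running the same estimate with $\binom{d}{\theta}$ and the factors $j/\kappa$ kept exact and checking, for each listed $(d,\theta)$, the least $\kappa$ for which the resulting series converges --- a finite verification.

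I expect the crux to be identifying and justifying the refractory bonus (ii): the plain union bound using only (i) gives the much weaker $\kappa>\binom{d}{\theta}^{1/(\theta-1)}\approx(de/\theta)^{1+1/(\theta-1)}$, so the work is to see that the GHM refractory period pins down the initial state of every history-tree node lying within the first $\kappa-1$ time steps, to check that these constraints are mutually independent (which is exactly where injectivity of the history tree, hence $\kappa\ge 3$, enters), and to carry the resulting arithmetic --- including the constant $e$ and the $O(1)$ coming from $\prod_j j^{\theta^{-j}}$ --- through to the stated exponent $1+1/(\theta^\kappa-1)$.
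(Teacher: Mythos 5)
Your proposal is correct and follows essentially the same route as the paper: your ``history tree'' with the refractory constraint pinning the initial state of a node excited at time $j\le\kappa-1$ to a set of $j$ colors is exactly the paper's Lemma~\ref{lem:GHM excited}, your count $N_t$ is Lemma~\ref{lem:subtree bound}, and the first-moment bound, the treatment of $\prod_j j^{\theta^{-j}}$, the tail sum, the $\theta=d$ case, and the finite check for the table all match the paper's argument. The one detail worth making explicit when you write it up is the bound $\prod_{k\ge 2}k^{1/\theta^k}\le e^{3/4}$ (via $\ln k\le k/2$), which is where the constant in the hypothesis $\kappa\ge e(de/\theta)^{1+1/(\theta^\kappa-1)}$ actually comes from.
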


For the condition in Theorem~\ref{thm:fluctuation} to be satisfied, since $\kappa\ge 3$ and $\theta\ge 2$, we at least need $d\ge 3+\sqrt{18 d\ln d}$, which requires $d\ge 87$. We do not know whether fluctuation occurs for much smaller values of $d$, such as those in the table above. The estimates used to prove Theorem~\ref{thm:fluctuation} have not been optimized, but we do not think this will lead to a significant improvement.
%


The study of CCA and GHM on infinite trees was initiated recently by Gravner, Lyu and the second author~\cite{gravner2018}, who showed fluctuation and computed the fluctuation rate when $\theta=1$ and $\kappa=3$. Previously, these models had been studied extensively on the integer lattices $\mathbb Z^d$. For $d=1$ and $\theta=1$, Fisch found that the CCA almost surely fixates for every $\kappa\geq 5$, and almost surely fluctuates for $\kappa \leq 4$ \cite{Fisch:1990gl}. This result confirmed that the CCA exhibits the same phase transition as was proved earlier by Bramson and Griffeath \cite{BandG89} for the cyclic particle systems on $\mathbb Z$, wherein vertices update their colors asynchronously. Fisch~\cite{Fisch:1992} later proved that the 3-color CCA ``clusters'' on $\mathbb Z$, meaning that any fixed pair of sites will have the same color with probability tending to 1 as $t\to\infty$. In fact, using random walk coupling arguments, he proved the stronger result that the probability of two sites having different colors at time $t$ is $1-\Theta(t^{-1/2})$. Durrett and Steif~ \cite{Durrett:1991ql} proved the analogous result for the 3-color GHM on $\mathbb{Z}$, showing that the probability of a site having color $1$ at time $t$ decays like $\Theta(t^{-1/2})$. Fisch and Gravner~\cite{Fisch1995} later extended this result to the $\kappa$-color GHM on $\mathbb Z$ for $\kappa\ge 4$. Lyu and the second author~\cite{LYU20191132} generalized the random walk coupling argument to accommodate spatially corollated initial colorings, which allowed them to recover these results for the 3-color CCA and GHM, and prove an analogous result for the 3-color ``firefly'' cellular automaton model that was introduced by Lyu~\cite{Lyu201528}. Recently, Foxall and Lyu~\cite{Foxall:2018pi} showed that the $3$- and $4$-color cyclic particle systems cluster on $\mathbb{Z}$, but highlight computing the clustering rate as an open problem.

On $\mathbb Z^2$ with $\theta=1$, Fisch, Gravner and Griffeath proved that the CCA~\cite{Fisch1991} and GHM~\cite{Fisch1993} fluctuate for all $\kappa \ge 3$ by analyzing the emergence of stable period objects (SPOs), which are local configurations that exhibit temporally periodic behavior regardless of the colors of vertices outside of the region. The existence of SPOs plays an important role in both CCA and GHM on $\mathbb Z^d$ with $d\ge 2$. Indeed, for neighborhoods of large radius $\rho$ in $\mathbb Z^2$, by proving existence of SPOs and using percolation arguments, Fisch, Gravner and Griffeath~\cite{Fisch:1991km,Fisch1993}, Durrett~\cite{Durrett:1992} and Durrett and Griffeath~\cite{DG:1993} prove that the CCA and GHM on $\mathbb Z^2$ exhibit distinct qualitative phases depending on the asymptotic value of $\frac{\kappa \theta}{\rho^2}$. In particular, they show that if $\frac{\kappa \theta}{\rho^2}$ is sufficiently small, then SPOs exist, which drive fluctuation and local $\kappa$-periodicity, and if $\frac{\kappa \theta}{\rho^2}$ is sufficiently large, then the dynamics fixate in bounded time. In two dimensions, a vertex with $\ell^p$-ball neighborhood of radius $\rho$ has degree $\sim c_p \rho^2$, so our results on $T_d$ are consistent with these long-range two-dimensional findings. In fact, as a  consequence of Theorems~\ref{thm:fixation} and~\ref{thm:fluctuation}, we obtain a sharp transition in the following sense. 

\begin{corollary}\label{cor:asymp}
Assume that as $d\to\infty$ either $\theta / \sqrt{d\ln d} \to \infty$ or $\kappa \sqrt{\frac{\ln d}{d}} \to 0$. Then the following hold.
\begin{enumerate}[label=\alph*)]
\item If $\displaystyle \liminf_{d\to\infty} \frac{\kappa\theta}{d} > 1$, then for all large enough $d$, CCA and GHM fixate on $T_d$ almost surely.
\item If $\displaystyle \limsup_{d\to\infty} \frac{\kappa\theta}{d} < 1$, then for all large enough $d$, CCA and GHM fluctuate on $T_d$ almost surely.
\end{enumerate}
\end{corollary}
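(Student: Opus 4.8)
The plan is to obtain Corollary~\ref{cor:asymp} directly from Theorems~\ref{thm:fixation} and~\ref{thm:fluctuation}, by checking that the stated asymptotic hypotheses force the quantitative conditions of those theorems to hold for all sufficiently large $d$. Throughout, $\kappa = \kappa(d)$ and $\theta = \theta(d)$ are regarded as sequences indexed by $d$, and the side conditions $d \ge 3$ (for Theorem~\ref{thm:fixation}) and $d \ge 4$ (for Theorem~\ref{thm:fluctuation}) hold automatically once $d$ is large, so the argument reduces to two elementary estimates, one per part.

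For part a), I would first invoke $\liminf_{d\to\infty} \kappa\theta/d > 1$ to fix $\epsilon > 0$ with $\kappa\theta \ge (1+\epsilon)d$ for all large $d$. The goal is to verify the hypothesis $\kappa(\theta - 3\sqrt{d\ln d}) \ge d$ of Theorem~\ref{thm:fixation}, i.e. $3\kappa\sqrt{d\ln d} \le \kappa\theta - d$. Since $\kappa\theta - d \ge \epsilon d$ and also $\kappa\theta - d \ge \tfrac{\epsilon}{1+\epsilon}\kappa\theta$, it is enough to bound the correction term $3\kappa\sqrt{d\ln d}$ by one of these two quantities. If $\kappa\sqrt{\ln d/d} \to 0$, then $3\kappa\sqrt{d\ln d}/d = 3\kappa\sqrt{\ln d/d} \to 0$, so $3\kappa\sqrt{d\ln d} \le \epsilon d$ eventually; if instead $\theta/\sqrt{d\ln d}\to\infty$, then $3\kappa\sqrt{d\ln d}/(\kappa\theta) = 3\sqrt{d\ln d}/\theta \to 0$, so $3\kappa\sqrt{d\ln d} \le \tfrac{\epsilon}{1+\epsilon}\kappa\theta$ eventually. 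In either case Theorem~\ref{thm:fixation} applies and yields fixation of both CCA and GHM on $T_d$ for all large $d$.

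For part b) I would dually use $\limsup_{d\to\infty}\kappa\theta/d < 1$ to fix $\epsilon > 0$ with $\kappa\theta \le (1-\epsilon)d$ for all large $d$. Since $\kappa(\theta-1) \le \kappa\theta \le (1-\epsilon)d$, the hypothesis $\kappa(\theta-1) \le d - \sqrt{6d\kappa\ln d}$ of Theorem~\ref{thm:fluctuation} follows as soon as $\sqrt{6d\kappa\ln d} \le \epsilon d$, equivalently $\kappa\ln d/d \le \epsilon^2/6$. If $\kappa\sqrt{\ln d/d}\to 0$, then $\kappa\ln d/d = (\kappa\sqrt{\ln d/d})\cdot\sqrt{\ln d/d} \to 0$; if instead $\theta/\sqrt{d\ln d}\to\infty$, then $\kappa \le (1-\epsilon)d/\theta$ and $\ln d/\theta = \sqrt{\ln d/d}\cdot(\sqrt{d\ln d}/\theta) \to 0$, so $\kappa\ln d/d \le (1-\epsilon)\ln d/\theta \to 0$. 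Either way the hypothesis of Theorem~\ref{thm:fluctuation} holds for all large $d$, giving fluctuation of CCA and GHM (as well as existence of $\lim_n \xi_{n\kappa}$ and $\lim_n \gamma_{n\kappa}$).

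I do not expect a genuine obstacle: the substance is entirely contained in Theorems~\ref{thm:fixation} and~\ref{thm:fluctuation}, and the only point requiring care is that the hypothesis of the corollary is a dichotomy, so one must handle the regime where $\theta$ dominates $\sqrt{d\ln d}$ (in which the correction terms are negligible relative to $\kappa\theta$) separately from the regime where $\kappa$ grows slowly relative to $\sqrt{d/\ln d}$ (in which the correction terms are negligible relative to $d$). It is also worth noting in a remark that some such dichotomy is genuinely required: for instance $\kappa = \lceil d/\ln d\rceil$ and $\theta = \lceil 2\ln d\rceil$ give $\kappa\theta/d \to 2$ while satisfying neither branch of the hypothesis, and this regime lies beyond the reach of our two theorems.
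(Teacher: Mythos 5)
Your proof is correct and follows essentially the same route as the paper: in each part you verify the quantitative hypothesis of Theorem~\ref{thm:fixation} or Theorem~\ref{thm:fluctuation} by showing the correction term ($3\kappa\sqrt{d\ln d}$, resp.\ $\sqrt{6d\kappa\ln d}$) is eventually negligible relative to $d$ under the branch $\kappa\sqrt{\ln d/d}\to 0$, and relative to $\kappa\theta$ (resp.\ via $\kappa\le (1-\epsilon)d/\theta$) under the branch $\theta/\sqrt{d\ln d}\to\infty$. The paper's proof is just a terser version of the same calculation (it only writes out the fixation case and asserts the fluctuation case is similar), and your added example showing the dichotomy is genuinely needed is a nice, correct observation.
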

\begin{proof}
Fixation is assured by Theorem~\ref{thm:fixation} for large $d$ if $\frac{\kappa\theta}{d}-3\kappa\sqrt{\frac{\ln d}{d}} \geq 1$. This is clearly satisfied if $\liminf_{d} \frac{\kappa\theta}{d} > 1$ and $\kappa \sqrt{\frac{\ln d}{d}} \to 0$. Rearranging, fixation is assured for large $d$ if 
$$
\frac{\kappa\theta}{d}\left(1 - 3\frac{\sqrt{d\ln d}}{\theta} \right) \ge 1,
$$ 
which is satisfied if $\liminf_{d} \frac{\kappa\theta}{d} > 1$ and $\frac{\sqrt{d\ln d}}{\theta}\to 0$. The verifications for fluctuation are similar.
\end{proof}



On trees, finite SPOs for CCA and GHM appear to be topologically prohibited~\cite{gravner2018}. Instead, to prove Theorem~\ref{thm:fluctuation} we identify infinite SPOs by adapting percolation methods of Balogh, Peres and Pete~\cite{balogh_peres_pete_2006} who study bootstrap percolation on $T_d$. A key idea of theirs that we use is the definition of a $k$-fort.
\begin{definition}\label{k-forts}
For a graph $G=(V,E)$ and a set of vertices $H\subseteq V$, let $\text{deg}_H(v) = |\{ w\in H: (w,v) \in E\}|$ be the number of neighbors of $v$ that lie in $H$. Let $k\ge 0$. We call $S\subseteq V$ a \textbf{$k$-fort} iff $S$ is connected and each $v\in S$ has $\text{deg}_{V\setminus S}(v) \leq k$. 
\end{definition}
For bootstrap percolation with occupation threshold $\theta$, a vacant $(\theta-1)$-fort is stable, and existence or nonexistence of these structures determines whether full occupation can occur. Unlike bootstrap percolation, however, the CCA and GHM are non-monotone cellular automata, and there is no inherent monotonicity in $d$ or $\kappa$. Nonetheless, the analog of a SPO for CCA or GHM on $T_d$ is a ``rainbow'' colored $\theta$-ary subtree (see Definition~\ref{rainbow def}), and we can use a more quantitative adaptation of the methods from~\cite{balogh_peres_pete_2006} to show that percolation of these structures leads to fluctuation. To prove Theorem~\ref{thm:fixation}, we rely on similar methods, with some careful modifications, to identify $(\theta-2)$-forts of stable vertices, which cause the rest of $T_d$ to fixate.

\section{Proof of Fixation}
We designate a root vertex $\rho \in T_d$.  For a pair of neighboring vertices in $T_d$, we call the one that is closer to $\rho$ the parent and the one that is farther from $\rho$ the child, and for any vertex in $T_d$ we refer to its neighbors farther from $\rho$ as its children, and its neighbor closer to $\rho$ as its parent. Throughout our proofs, we will assume addition and subtraction of colors ($\xi$ and $\gamma$) are taken modulo $\kappa$. 

We begin with the definition of a rigid set, which we state for CCA $(\xi_t)$, though it applies also for GHM $(\gamma_t)$.

\begin{definition}\label{rigid}
Let $T$ be a rooted tree and $\xi_0\in \{0,\ldots, \kappa-1\}^{T}$. We say a set of vertices $S\subseteq T$ is \textbf{rigid} (for $\xi_0$) iff for any pair of neighboring vertices $p,v\in S$ such that $p$ is the parent of $v$, we have $\xi_0(v) - \xi_0(p) \neq 1$.
Furthermore, we say a non-root vertex is {rigid} if the set containing it and its parent is rigid.
\end{definition}

Rigid $k$-forts play an important roll in fixation due to the following observation.

\begin{lemma} \label{lem:rigid k-fort}{\ }
\begin{enumerate}
\item[i)] For CCA: If $S\subseteq T_d$ is a rigid $(\theta-2)$-fort for $\xi_0$, then $(\xi_t(v))_{t\ge0}$ is a constant sequence for every $v\in S$.
\item[ii)] For GHM: If $S\subseteq T_d$ is a rigid $(\theta-2)$-fort for $\gamma_0$, then $\gamma_t(v) \neq 1$ for all $t\ge 1$ and $v\in S$, and so $\gamma_t(v)=0$ for all $t\ge \kappa-1$ and $v\in S$.
\end{enumerate}
\end{lemma}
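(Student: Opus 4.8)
The plan is to prove both parts by induction on time, using two facts about a rigid $(\theta-2)$-fort $S$: each $v\in S$ has at most $\theta-2$ neighbors in $T_d\setminus S$ (the fort property), and rigidity constrains the colors of parent--child pairs inside $S$. The recurring device is a ``census'' of the neighbors of a vertex $v\in S$ into the at most $\theta-2$ that lie outside $S$, the children of $v$ inside $S$, and the single parent edge (when the parent lies in $S$), together with the observation that too few of these can ever act as an ``attacker'' of $v$.

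For part (i), I would prove by induction on $t$ that $\xi_t\equiv\xi_0$ on $S$ (equivalently, that each $v\in S$ never changes color). Granting this at time $t$, a vertex $v\in S$ can advance at step $t$ only if at least $\theta$ of its $d+1$ neighbors $u$ satisfy $\xi_t(u)=\xi_t(v)+1$. Among the neighbors of $v$ in $S$: every child $w$ of $v$ has $\xi_t(w)=\xi_0(w)$, and rigidity (applied to the pair $v,w$) gives $\xi_0(w)-\xi_0(v)\ne 1$, so $w$ is not such a neighbor; the only remaining neighbor of $v$ in $S$ is its parent, contributing at most one. Adding the at most $\theta-2$ neighbors outside $S$, we find at most $\theta-1<\theta$ neighbors of color $\xi_t(v)+1$, so $v$ does not change. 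This completes the induction, and rigidity of $\xi_{t+1}$ on $S$ is automatic since the colors on $S$ are unchanged.

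For part (ii), I would first note that the only way a vertex becomes excited is from the resting state: if $\gamma_t(v)\ge 1$ then $\gamma_{t+1}(v)=\gamma_t(v)+1\ne 1$ because $\kappa\ge 3$. Hence it suffices to show that no $v\in S$ is excited at any time $t\ge 1$; the final assertion follows because a vertex that at time $1$ is refractory (color in $\{2,\dots,\kappa-1\}$) returns to rest within $\kappa-2$ further steps and, never being re-excited, stays there. To prove the main claim, suppose $t^\ast\ge 1$ is minimal with $\gamma_{t^\ast}(v^\ast)=1$ for some $v^\ast\in S$; then $\gamma_{t^\ast-1}(v^\ast)=0$ and $v^\ast$ has at least $\theta$ neighbors excited at time $t^\ast-1$. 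If $t^\ast\ge 2$, minimality forbids any neighbor of $v^\ast$ in $S$ from being excited at time $t^\ast-1\ge 1$, so all $\ge\theta$ of these excited neighbors lie in $T_d\setminus S$, contradicting $\text{deg}_{T_d\setminus S}(v^\ast)\le\theta-2$. If $t^\ast=1$, then $\gamma_0(v^\ast)=0$, and rigidity forces $\gamma_0(w)\ne 1$ for every child $w\in S$ of $v^\ast$ (since $\gamma_0(w)-\gamma_0(v^\ast)\ne 1$), so at most one neighbor of $v^\ast$ in $S$, namely its parent, can be excited at time $0$; with the at most $\theta-2$ neighbors outside $S$ this again gives fewer than $\theta$ excited neighbors, a contradiction.

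The argument is mostly bookkeeping, and the one point that requires care — which I would flag as the crux — is the asymmetry between a vertex's parent and its children under the rigidity condition: rigidity constrains only child-minus-parent color differences, so it rules out children (but not the parent) as attackers, which is exactly why the relevant structure is a $(\theta-2)$-fort rather than a $(\theta-1)$-fort. I would also check the harmless edge cases: $v^\ast$ being the root (no parent, so the counts only improve) and $\theta=2$ (a connected $0$-fort in $T_d$ is all of $T_d$, and the claims still hold immediately).
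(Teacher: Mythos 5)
Your proof is correct and follows essentially the same route as the paper's: the same census of a vertex's neighbors into at most $\theta-2$ outside $S$, at most one parent in $S$, and children in $S$ that are excluded as attackers by rigidity, with part (i) run as a direct induction rather than the paper's minimal-counterexample contradiction, and part (ii) as a minimal counterexample rather than the paper's induction — logically equivalent reorganizations. The closing observations (excitation only from the resting state, the $\kappa-1$ relaxation time, and the edge cases) all check out.
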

\begin{proof}
We first show  $(\xi_t(v))_{t\ge0}$ is constant for every $v\in S$. For a contradiction, suppose $t\ge 1$ is the smallest such that there exists $v\in S$ with $\xi_t(v)\neq \xi_{t-1}(v)$. Then $v$ must have at least $\theta$ neighbors $w$ such that $\xi_{t-1}(w) - \xi_{t-1}(v) = 1$. Since $v$ has at most $(\theta-2)$ neighbors outside of $S$, and at most one of its neighbors within $S$ can be its parent, it must have at least one child $w\in S$ such that $\xi_0(w) -\xi_0(v) = \xi_{t-1}(w) - \xi_{t-1}(v) = 1$. This contradicts $S$ being rigid in $\xi_0$.

We now show that $\gamma_t(v)\neq 1$ for all $t\ge 1$ and all $v\in S$. We prove this by induction on $t$. Let $v\in S$. For $\gamma_1(v)=1$ to occur, we must have $\gamma_0(v)=0$ and $v$ must have at least $\theta$ neighbors $w$ such that $\gamma_0(w) =1$. Since $v$ has at most $(\theta-2)$ neighbors outside of $S$, and at most one of its neighbors within $S$ can be its parent, it must have at least one child $w\in S$ such that $\gamma_0(w)= \gamma_0(w)- \gamma_0(v) = 1$, which contradicts $S$ being rigid in $\gamma_0$. Therefore, $\gamma_1(v) \neq 1$ for all $v\in S$. Now suppose $\gamma_t(v) \neq 1$ for all $v\in S$. Then every vertex of $S$ has at most $(\theta-2)$ neighbors outside of $S$, and so has at most $(\theta-2)$ neighbors that can be in state $1$ at time $t$. This implies $\gamma_{t+1}(v) \neq 1$ for all $v\in S$, which completes the induction. It follows that $\gamma_t(v) = 0$ for all $t\ge \kappa-1$ and $v\in S$ since non-zeros initially in $S$ will reset to $0$ in at most $\kappa-1$ steps, and $0$s must stay $0$ since no new $1$s ever emerge in $S$.
\end{proof}



Now the strategy of our proof is as follows. First, we show that an infinite rooted $d$-ary tree contains a rigid $(\theta-2)$-fort that includes the root with large probability. Second, we use this to show that all infinite non-backtracking paths from the root of $T_d$ will intersect a rigid $(\theta-2)$-fort almost surely, resulting in a finite, potentially non-fixated, connected subtree containing the root. Last, we show that $(\xi_t)$ fixates over any such subtree. The first two steps follow arguments similar to what was done in \cite{balogh_peres_pete_2006}, though we make more explicit estimates to obtain quantitative results. Also, it is crucial for our asymptotic results that we identify rigid $(\theta-2)$-forts, rather than ``strongly'' rigid $(\theta-1)$-forts (see Definition~\ref{strongly rigid}), as suggested by the proof of~\cite{balogh_peres_pete_2006}, otherwise Corollary~\ref{cor:asymp} part (a) would require $\liminf d/(\kappa\theta)>2$, and we would not see the sharp transition. 

We will use a well known Chernoff bound, which we include here for reference.
\begin{lemma}\label{chernoff bound}
Let $X$ be a binomially distributed random variable with mean $\mu$. Then for any $\delta\in [0,1]$,
$$\mathbb P(X \leq (1-\delta)\mu) \leq e^{-\frac{\delta^2 \mu}{2}}.$$
\end{lemma}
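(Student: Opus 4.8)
The plan is to apply the standard exponential-moment (Chernoff) method to the lower tail. Write $X=\sum_{i=1}^n X_i$ as a sum of i.i.d.\ Bernoulli$(p)$ random variables with $np=\mu$. For any $t>0$, Markov's inequality applied to the nonnegative random variable $e^{-tX}$ gives
\[
\mathbb P(X\le (1-\delta)\mu)=\mathbb P\bigl(e^{-tX}\ge e^{-t(1-\delta)\mu}\bigr)\le e^{t(1-\delta)\mu}\,\mathbb E\bigl[e^{-tX}\bigr].
\]
By independence and the elementary inequality $1+x\le e^{x}$,
\[
\mathbb E\bigl[e^{-tX}\bigr]=\bigl(1+p(e^{-t}-1)\bigr)^{n}\le \exp\bigl(np(e^{-t}-1)\bigr)=e^{\mu(e^{-t}-1)},
\]
so that $\mathbb P(X\le (1-\delta)\mu)\le \exp\bigl(\mu[\,t(1-\delta)+e^{-t}-1\,]\bigr)$ for every $t>0$.

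Next I would optimize the exponent over $t$. For $\delta\in(0,1)$ the choice $t=\ln\frac{1}{1-\delta}>0$ minimizes $t(1-\delta)+e^{-t}-1$, and substituting it yields
\[
\mathbb P(X\le (1-\delta)\mu)\le \exp\Bigl(-\mu\bigl[(1-\delta)\ln(1-\delta)+\delta\bigr]\Bigr).
\]
The cases $\delta=0$ and $\delta=1$ are trivial: for $\delta=0$ the claimed bound is $1$, and for $\delta=1$ it reduces to $\mathbb P(X=0)=(1-p)^n\le e^{-\mu}\le e^{-\mu/2}$.

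It then remains to prove the elementary inequality $(1-\delta)\ln(1-\delta)+\delta\ge \tfrac{\delta^2}{2}$ on $[0,1)$, which immediately gives the stated bound $e^{-\delta^2\mu/2}$. Setting $g(\delta)=(1-\delta)\ln(1-\delta)+\delta-\tfrac{\delta^2}{2}$, one checks $g(0)=0$, $g'(\delta)=-\ln(1-\delta)-\delta$ with $g'(0)=0$, and $g''(\delta)=\frac{\delta}{1-\delta}\ge 0$ on $[0,1)$; hence $g'$ is nondecreasing and therefore nonnegative, so $g$ is nondecreasing and $g\ge g(0)=0$. This last calculus estimate is the only point requiring any care; the rest is the routine Chernoff computation.
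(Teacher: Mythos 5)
Your proof is correct: the Markov/MGF step, the optimization at $t=\ln\frac{1}{1-\delta}$, the boundary cases $\delta=0,1$, and the convexity argument for $(1-\delta)\ln(1-\delta)+\delta\ge\delta^2/2$ all check out. The paper states this lemma as a well-known bound and gives no proof, so there is nothing to compare against; your argument is the standard exponential-moment derivation one would cite.
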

We obtain the following binomial probability bounds, which will be applied in our proof of fixation.

\begin{lemma}\label{cdf bounds}
For $d\ge 3$,
  if $\kappa\left(\theta-3\sqrt{d\ln(d)}\right) \geq d$, then
\begin{equation}\label{main event}\mathbb P \left(\textnormal{Binom}\left(d, (1-d^{-2})\left(1-1/\kappa\right)\right) \leq d-\theta+1\right) \leq \frac{1}{d^2}\end{equation}
and
\begin{equation}\label{side event}\mathbb P\left(\textnormal{Binom}\left(d-1, (1-d^{-2})\left(1-1/\kappa\right)\right) \leq d-\theta+2 \right)\leq \frac{1}{d^2}.\end{equation}
\end{lemma}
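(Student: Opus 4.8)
The plan is to apply the Chernoff bound from Lemma~\ref{chernoff bound} to the binomial random variables appearing in \eqref{main event} and \eqref{side event}, after first identifying the correct deviation parameter $\delta$. Let me set $p = (1-d^{-2})(1-1/\kappa)$ and consider $X \sim \text{Binom}(d,p)$ with mean $\mu = dp$. The event $\{X \le d-\theta+1\}$ is the event $\{X \le (1-\delta)\mu\}$ where $(1-\delta)\mu = d-\theta+1$, i.e. $\delta = 1 - (d-\theta+1)/(dp)$. To invoke Lemma~\ref{chernoff bound} we must check $\delta\in[0,1]$, which amounts to $0 \le d-\theta+1 \le dp$; the right inequality $d-\theta+1\le dp$ is where the hypothesis $\kappa(\theta - 3\sqrt{d\ln d})\ge d$ enters, and the left inequality holds since $\theta\le d$ in the nontrivial regime (if $\theta > d+1$ the event is empty and the bound is trivial). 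Then Lemma~\ref{chernoff bound} gives $\mathbb P(X\le d-\theta+1)\le \exp(-\delta^2\mu/2)$, and it remains to show $\delta^2\mu/2 \ge 2\ln d$, equivalently $\delta^2 \mu \ge 4\ln d$, to conclude the bound $1/d^2$.

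The heart of the computation is thus a lower bound on $\delta\mu = \mu - (d-\theta+1)$ and, relatedly, on $\delta$ itself. First I would estimate $\mu = dp = d(1-d^{-2})(1-1/\kappa) = (d - 1/d)(1 - 1/\kappa) \ge d - d/\kappa - 1/d$. Hence
\begin{equation*}
\delta\mu = \mu - (d-\theta+1) \ge \theta - 1 - \frac{d}{\kappa} - \frac1d.
\end{equation*}
The hypothesis $\kappa(\theta - 3\sqrt{d\ln d})\ge d$ rearranges to $\theta \ge 3\sqrt{d\ln d} + d/\kappa$, so $\theta - d/\kappa \ge 3\sqrt{d\ln d}$ and therefore $\delta\mu \ge 3\sqrt{d\ln d} - 1 - 1/d$. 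For $d\ge 3$ this is comfortably at least, say, $2\sqrt{d\ln d}$. Now $\delta^2\mu = (\delta\mu)^2/\mu \ge (\delta\mu)^2/d$ since $\mu \le d$; combining, $\delta^2\mu \ge (2\sqrt{d\ln d})^2/d = 4\ln d$, which is exactly what is needed. (I would of course track the constants a little more carefully than this sketch: the slack between $3\sqrt{d\ln d}$ and $2\sqrt{d\ln d}$ absorbs the $-1-1/d$ correction for all $d\ge 3$, and one should double-check the borderline small values.)

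The argument for \eqref{side event} is essentially identical: now $Y\sim \text{Binom}(d-1,p)$ with mean $\mu' = (d-1)p$, and the target event is $\{Y \le d-\theta+2\} = \{Y \le (1-\delta')\mu'\}$ with $\delta'\mu' = \mu' - (d-\theta+2)$. Since $\mu' = \mu - p \ge \mu - 1$, we get $\delta'\mu' \ge \delta\mu - 1 - 1 = \delta\mu - 2 \ge 3\sqrt{d\ln d} - 3 - 1/d$, and then $\delta'^2\mu' \ge (\delta'\mu')^2/(d-1) \ge (\delta'\mu')^2/d \ge 4\ln d$ for $d\ge 3$, again after checking $\delta'\in[0,1]$ via the same hypothesis. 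I expect the only real subtlety — and the step most prone to an off-by-a-constant error — is verifying that $3\sqrt{d\ln d}$ (minus the small additive corrections, and after squaring and dividing by $d$ rather than $\mu$) genuinely dominates $4\ln d$ uniformly for all $d \ge 3$, i.e. that the constant $3$ in the hypothesis was chosen large enough; this is where I would be most careful, possibly treating a few small $d$ by direct numerical check and the rest by the monotone estimate above.
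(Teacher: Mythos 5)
Your proof is correct and is essentially the paper's own argument: the same Chernoff bound with the same $\delta$, the same lower bound $\delta\mu \ge \theta - d/\kappa - 1 - 1/d \ge 3\sqrt{d\ln d} - 1 - 1/d \ge 2\sqrt{d\ln d}$ (the paper phrases this as $1/d + 1/d^2 \le \sqrt{\ln(d)/d}$ for $d\ge 3$), and the same conclusion $\delta^2\mu/2 \ge 2\ln d$. The loose end you flagged for \eqref{side event} is real but harmless: $3\sqrt{d\ln d} - 3 - 1/d \ge 2\sqrt{d\ln d}$ fails for $d\in\{3,4,5\}$, but for those $d$ the hypothesis forces $\theta > 3\sqrt{d\ln d} > d+2$, so the event $\{Y \le d-\theta+2\}$ is empty and the bound is trivial.
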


\begin{proof}
Assume first that $d\ge 3$ and $\kappa\left(\theta-3\sqrt{d\ln(d)}\right) \geq d$. We will only prove (\ref{main event}) since the proof of (\ref{side event}) is nearly identical. Since $\kappa(\theta-3) > \kappa\left(\theta-3\sqrt{d\ln(d)}\right) \geq d$, we have 
$$\delta := \frac{d(1-\frac{1}{d^2})(1-\frac1\kappa) - (d-\theta+1)}{d(1-\frac{1}{d^2})(1-\frac1\kappa)}> 0.$$
Applying Lemma \ref{chernoff bound} with this value of $\delta$ gives
\begin{align*}
&d^{2} \cdot \mathbb P(\text{Binom}\left(d, (1-d^{-2})\left(1-1/\kappa\right)\right) \leq d-\theta+1)  \\
 &\hspace{3cm} \leq \exp\left[ -\frac{d((1-d^{-2})(1-1/\kappa)- (1-\theta/d+ 1/d))^2}{2(1-d^{-2})(1-1/\kappa)} + 2\ln(d)  \right] \\
&\hspace{3cm} \leq \exp\left[ -\frac{d}{2}((1-d^{-2})(1-1/\kappa)- (1-\theta/d+1/d))^2 + 2\ln(d)\right]\\
&\hspace{3cm} =\exp\left[ -\frac{d}{2}\left(\frac{\theta}{d} -\frac{1}{d^2} - \frac1\kappa -\frac{1}{d} + \frac{1}{\kappa d^2} \right)^2 + 2\ln(d)\right]\\
&\hspace{3cm} \leq \exp\left[ -\frac{d}{2}\left(\frac{\theta}{d} - \frac1\kappa-\frac{1}{d^2}  -\frac{1}{d} \right)^2 + 2\ln(d)\right].\\
\intertext{By our assumption, we have $\frac{\theta}{d} -\frac{1}{\kappa} \geq 3\sqrt{\frac{\ln(d)}{d}}$ and for $d\ge 3$ we have $\frac{1}{d^2} + \frac{1}{d} \leq \sqrt{\frac{\ln(d)}{d}} $. Hence,}
&d^{2} \cdot \mathbb P(\text{Binom}\left(d, (1-d^{-2})\left(1-1/\kappa\right)\right) \leq d-\theta+1)  \\
&\hspace{3cm} \leq \exp\left[ -\frac{d}{2}\left( 2\sqrt{\frac{\ln(d)}{d}} \right)^2 + 2\ln(d)\right]\\
&\hspace{3cm} = 1.\\
\end{align*}
Thus for $d\ge 3$, we have (\ref{main event}) as desired, and (\ref{side event}) follows in a similar manner, where $d\ge 3$ is again sufficient.
\end{proof}

With these tools in hand, we move on to the first step in our proof. We define the rooted $d$-ary tree $\mathcal{T}_d$ to be the infinite tree with root node $\rho$ such that every vertex has degree $d+1$ except for the root, which has degree $d$. That is, every vertex of $\mathcal{T}_d$ has $d$ children. We let $\bar \xi$ be a uniform random coloring of the vertices in $\mathcal{T}_d$.
\begin{lemma}\label{rigid subtree}
Let  $E$ be the event that $\bar\xi$ does not contain a rigid $(\theta -2)$-fort that includes the root of $\mathcal{T}_d$. For $d\ge 3$, if  $\kappa\left(\theta-3\sqrt{d\ln(d)}\right) \geq d$, then $\mathbb P( E) \leq \frac{1}{d^2}$.
\end{lemma}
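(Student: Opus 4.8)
The plan is to set up a recursive survival argument on the rooted $d$-ary tree $\mathcal{T}_d$, in the spirit of the fort-existence arguments in~\cite{balogh_peres_pete_2006}, but tracking probabilities quantitatively. Let me think about the structure of what we need to prove. We want to show that with probability at least $1 - d^{-2}$, there is a rigid $(\theta-2)$-fort containing the root. Recall a set $S$ is a $(\theta-2)$-fort if it is connected and every $v \in S$ has at most $\theta-2$ neighbors outside $S$; equivalently (on the $d$-ary tree, away from the root), every vertex of $S$ has at least $d - (\theta - 2)$ of its $d$ children lying in $S$ (the parent edge accounts for one more outside neighbor when relevant). Rigidity adds the constraint that for each parent-child pair $p,v$ in $S$, $\bar\xi(v) - \bar\xi(p) \not\equiv 1 \pmod\kappa$.

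The key step is to define, for each vertex $v$, the event $A_v$ that the subtree of $\mathcal{T}_d$ rooted at $v$ contains a rigid $(\theta-2)$-fort whose root is $v$ and which, as a structure living entirely inside $v$'s subtree, has every vertex — including $v$ — missing at most $\theta-2$ *children*. Writing $q = \mathbb P(A_v^c)$ (the same for all $v$ by the recursive self-similarity of $\mathcal{T}_d$), I would derive a recursive inequality for $q$: conditioning on $\bar\xi(v)$, the vertex $v$ can extend such a fort into a child $w$ provided $\bar\xi(w) \ne \bar\xi(v) + 1 \pmod\kappa$ (probability $1 - 1/\kappa$) and the event $A_w$ holds for $w$. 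These are independent across the $d$ children, so the number of "good" children stochastically dominates $\mathrm{Binom}(d, (1-q)(1 - 1/\kappa))$, and $v$ succeeds in rooting a fort iff at least $d - (\theta - 2) = d - \theta + 2$ children are good. This would give roughly
$$
q \le \mathbb P\big(\mathrm{Binom}(d, (1-q)(1-1/\kappa)) \le d - \theta + 1\big).
$$
Now I would like to run a fixed-point / bootstrapping argument: one checks that if $q \le d^{-2}$ then $(1-q)(1-1/\kappa) \ge (1 - d^{-2})(1 - 1/\kappa)$, so the right-hand side is at most $\mathbb P(\mathrm{Binom}(d, (1-d^{-2})(1-1/\kappa)) \le d - \theta + 1) \le d^{-2}$ by the bound~\eqref{main event} of Lemma~\ref{cdf bounds}. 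So $q \le d^{-2}$ is a self-consistent bound; to actually conclude $q \le d^{-2}$ one argues that $q$ is the limit (or bounded by the limit) of the analogous finite-depth quantities $q_n$, which satisfy $q_0 = 1$ and the same recursion with $q$ replaced by $q_n$ on the right and $q_{n+1}$ on the left — and by monotonicity of the recursion and the fact that $d^{-2}$ is a fixed point above which the map is a contraction toward it, $q_n$ decreases to a limit $\le d^{-2}$. Finally, the event $E$ that *no* rigid $(\theta-2)$-fort in $\mathcal{T}_d$ contains the root of $\mathcal{T}_d$: here the root has only $d$ children (no parent), so it is contained in such a fort as soon as at least $d - \theta + 2$ of its children $w$ have $\bar\xi(w) \ne \bar\xi(\rho)+1$ and $A_w$ holds — wait, that is almost the same event, so $\mathbb P(E) \le \mathbb P(\mathrm{Binom}(d, (1-q)(1-1/\kappa)) \le d - \theta + 1) \le d^{-2}$, using~\eqref{main event} again; the bound~\eqref{side event} is the one that gets used when handling interior vertices that do have a parent (only $d-1$ "free" children once the parent and the fort-extension structure are accounted for).

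The main obstacle I anticipate is making the recursion rigorous rather than heuristic: stating precisely the event $A_v$ so that (i) it is genuinely determined by the colors in $v$'s subtree, hence independent across siblings, (ii) the "at least $d-\theta+2$ good children" condition really does certify that $v$ together with the sub-forts forms a connected rigid $(\theta-2)$-fort — one must check that gluing the child-forts at $v$ doesn't violate the degree condition at $v$ itself (it has $d - (\text{number of good children})\le\theta-2$ children outside, plus possibly a parent outside, but within $v$'s own subtree we only count children, which is why the definition must be stated in terms of children-missing $\le \theta-2$, and the parent edge is absorbed once, consistently with the $d$ vs $d-1$ distinction between~\eqref{main event} and~\eqref{side event}), and (iii) rigidity is preserved — each newly added parent-child edge $(v,w)$ satisfies $\bar\xi(w)-\bar\xi(v)\ne 1$ by construction, and edges deeper down are rigid by the inductive structure of $A_w$. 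A secondary technical point is justifying the passage from the finite-depth truncations $q_n$ to the infinite tree (using that a rigid fort is a monotone-in-depth certificate, so $A_v = \bigcup_n A_v^{(n)}$ and $q = \lim q_n$), and verifying the monotonicity/fixed-point behavior of the map $x \mapsto \mathbb P(\mathrm{Binom}(d,(1-x)(1-1/\kappa)) \le d-\theta+1)$ on $[0, d^{-2}]$, for which Lemma~\ref{cdf bounds} does the heavy lifting.
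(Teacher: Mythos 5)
Your setup coincides with the paper's: conditioning on the root's color and using the independence of the $d$ child subtrees, $q:=\mathbb P(E)$ is a fixed point of $B_1(x) = \mathbb P\left(\mathrm{Binom}\left(d,(1-x)(1-1/\kappa)\right) \le d-\theta+1\right)$, and Lemma~\ref{cdf bounds} supplies $B_1(d^{-2})\le d^{-2}$. The gap is in the step that converts this into $q\le d^{-2}$. You rightly note that "if $q\le d^{-2}$ then $q\le d^{-2}$" is only a self-consistency check, and you propose to close the loop by writing $q$ as the limit of finite-depth quantities $q_n$ with $q_0=1$ and $q_{n+1}=B_1(q_n)$, claiming these decrease to a limit at most $d^{-2}$. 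That cannot work: $x=1$ is itself a fixed point of $B_1$ (indeed $B_1(1)=\mathbb P(\mathrm{Binom}(d,0)\le d-\theta+1)=1$), so the iteration started at $1$ is constant and never leaves $1$; there is no contraction toward $d^{-2}$ from above. Since $B_1$ has at least two fixed points in $[0,1]$, the entire content of the lemma is to show $\mathbb P(E)$ is the \emph{smallest} one, and an approximation from above is structurally incapable of doing that.

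The paper's resolution is to approximate from below: let $y_n$ be the probability that the root is not in a rigid $(\theta-2)$-fort of $\mathcal T_d$ truncated at depth $n$. Then $y_0=0$ (the root alone is trivially a fort of the depth-$0$ truncation), $y_{n+1}=B_1(y_n)$, and $y_n\uparrow \mathbb P(E)$ (the identification of the limit with $\mathbb P(E)$ is the compactness/K\"onig's-lemma point you flag as a "secondary technicality," but with the correct monotone direction), so $\mathbb P(E)$ is the smallest fixed point of $B_1$. Then, since $y_0=0\le d^{-2}$ and $B_1$ is increasing with $B_1(d^{-2})\le d^{-2}$, induction gives $y_n\le d^{-2}$ for all $n$, hence $\mathbb P(E)\le d^{-2}$. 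Your remaining points---independence of the child events, the degree bookkeeping explaining why the binomial threshold $d-\theta+1$ is the same at the root (no parent, at most $\theta-2$ missing children) and at internal vertices whose parent lies in the fort, and preservation of rigidity when gluing the child forts---are correct and match the paper.
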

\begin{proof}
Given the color of the root $\rho\in \mathcal{T}_d$, we may think of the rigid subtree containing the root as a Galton-Watson tree with offspring distribution Binom$(d, 1-\frac 1\kappa)$. The success probability in the offspring distribution is $1-\frac 1\kappa$ because for the subtree being constructed to remain rigid, each child must only avoid the color that is one greater than the parent's color. The root $\rho$ is in a rigid $(\theta-2)$-fort if and only if $\rho$ has at least $(d-\theta+2)$ rigid children and, in the forest obtained by removing $\rho$ from $\mathcal{T}_d$, at least $(d-\theta+2)$ of these must be contained in a rigid $(\theta-2)$-fort. The maximal disjoint trees obtained by removing $\rho$ from $\mathcal{T}_d$ are copies of $\mathcal{T}_d$, and since $E^c$ is independent of the color of the root in $\mathcal{T}_d$, we have that each child of $\rho$ is independently both rigid and in a rigid $(\theta-2)$-fort of $\mathcal{T}_d\setminus\{\rho\}$ with probability $(1-\frac 1\kappa)(1-\mathbb P(E))$.  It follows that $\mathbb P(E)$ is a fixed point of the function
$$B_1(x) := \mathbb P\left(\text{Binom}\left(d, (1-x)\left(1-1/\kappa\right)\right) \leq d-\theta+1\right),$$
for $x\in[0,1]$ (see Figure \ref{fig:B_1(x) plots}). The value $x=1$ is always a fixed point of $B_1$, but we will show that $\mathbb P(E)$ is always the smallest fixed point of $B_1(x)$ in $[0,1]$. First, using calculus and binomial coefficient identities we have
$$\frac{\partial}{\partial x} B_1(x) = d(1-  1/\kappa) \mathbb P\left(\text{Binom}\left(d-1, (1-x)\left(1-1/\kappa\right)\right) = d-\theta+1\right).$$
So for all $x\in[0,1)$, $B_1(x)$ is a monotonically increasing function with $B_1(0) > 0$. Let $y_n$ be the probability that $\rho$ is not in a rigid $(\theta-2)$-fort in $\mathcal{T}_d$ truncated at distance $n$, then $y_0 = 0$, $y_{n+1} = B_1(y_{n})$, and $y_n \to \mathbb P(E)$. By Kleene's fixed-point theorem, $\mathbb P(E)$ is the smallest fixed point of $B_1(x)$. Furthermore, if there exists $x_0\in (0,1)$ such that $B_1(x_0)\leq  x_0$, then $\mathbb P(E) \leq x_0$. By \eqref{main event} in Lemma \ref{cdf bounds}, we have $B_1(d^{-2})\leq  d^{-2}$, so $\mathbb P(E) \leq d^{-2}$ as desired.
\end{proof}

\begin{figure}[h!]
  \centering
  \begin{subfigure}[b]{0.45\linewidth}
    \includegraphics[width=\linewidth]{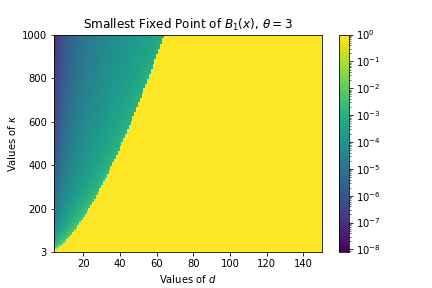}
  \end{subfigure}
  \begin{subfigure}[b]{0.45\linewidth}
    \includegraphics[width=\linewidth]{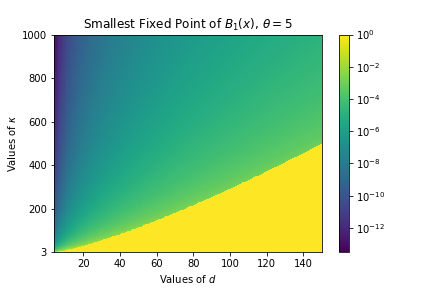}
  \end{subfigure}
  \caption{The values of the smallest fixed point of $B_1$ as $\kappa$ and $d$ vary for $\theta=3$ (left) and $\theta=5$ (right). The smallest fixed point of $B_1$ determines the probability that a rigid $(\theta-2)$-fort does not contain the root of a $d$-ary tree. This probability is equal to $1$ on the yellow regions, and is strictly less than $1$ on the green-blue regions. For fixed $d$ and $\theta$, once $\kappa$ exceeds the threshold value where the smallest fixed point drops below $1$, the smallest fixed point of $B_1$ quickly approaches 0. The threshold for $\kappa$ appears to be nonlinear in $d$, at least for these small values of $\theta$ and $d$.}
  \label{fig:B_1(x) plots}
\end{figure}

\begin{lemma}\label{path rigid}
Suppose $\kappa\left(\theta-3\sqrt{d\ln(d)}\right) \geq d \ge 3$. Then with probability 1 every infinite non-backtracking path in $T_d$ starting at $\rho$ will intersect a rigid $(\theta-2)$-fort. Moreover, if $E_n$ is the event that there exists a non-backtracking path of length $n$ starting at $\rho$ that does not intersect a rigid $(\theta-2)$-fort, then
$$
\mathbb{P}(E_n)\le d^{-n}.
$$
\end{lemma}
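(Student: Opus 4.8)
The plan is a union bound over non-backtracking paths from $\rho$. Root $T_d$ at $\rho$, and for a vertex $w$ let $T_w$ be the subtree consisting of $w$ together with its descendants, so $T_w\cong\mathcal T_d$. Call a child $u$ of a vertex $w$ \emph{good} if $\xi_0(u)-\xi_0(w)\ne1$ (that is, $u$ is rigid in the sense of Definition~\ref{rigid}) and, in addition, $\xi_0$ restricted to $T_u$ contains a rigid $(\theta-2)$-fort of $T_u$ that includes $u$. Fix a non-backtracking path $P=(v_0=\rho,v_1,\dots,v_n)$ with $n\ge1$ (there are $(d+1)d^{n-1}$ of these), and for $0\le i\le n$ let $A_i$ be the event that $v_i$ has at most $d-\theta+2$ good children other than $v_{i+1}$ --- the exclusion of $v_{i+1}$ being vacuous when $i=n$, and recalling that $v_0=\rho$ has $d+1$ children while each $v_i$ with $i\ge1$ has $d$. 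I will establish: (i) if $P$ meets no rigid $(\theta-2)$-fort of $T_d$, then every $A_i$ occurs; (ii) $A_0,\dots,A_n$ are mutually independent; and (iii) $\mathbb P(A_i)\le d^{-2}$ for each $i$. Granting these, $\mathbb P(P\text{ meets no rigid }(\theta-2)\text{-fort})\le\prod_{i=0}^n\mathbb P(A_i)\le d^{-2(n+1)}$, so a union bound gives $\mathbb P(E_n)\le(d+1)d^{n-1}d^{-2(n+1)}=(d+1)d^{-n-3}\le d^{-n}$ (using $d+1\le d^3$ for $d\ge2$); the case $n=0$ is trivial since $\mathbb P(E_0)\le1$.

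Step (i) is the heart of the argument. Suppose $A_i$ fails. Then we may pick good children $u_1,\dots,u_{d-\theta+3}$ of $v_i$, none equal to $v_{i+1}$, and for each a rigid $(\theta-2)$-fort $S_{u_j}\subseteq T_{u_j}$ of $T_{u_j}$ with $u_j\in S_{u_j}$; this choice is possible because the hypothesis forces $\theta>3\sqrt{d\ln d}$, hence $\theta\ge6$, so $d-\theta+3$ is no larger than the number of available children. Put $S=\{v_i\}\cup\bigcup_j S_{u_j}$. Then $S$ is connected; it is rigid for $\xi_0$, since the only parent--child pairs inside $S$ are the pairs $(v_i,u_j)$ --- each with $u_j$ rigid --- together with pairs lying inside some $S_{u_j}$; and it is a $(\theta-2)$-fort of $T_d$: any $w\in S_{u_j}$ has all of its $T_d$-neighbours in $T_{u_j}\cup\{v_i\}$, so $\deg_{T_d\setminus S}(w)\le\deg_{T_{u_j}\setminus S_{u_j}}(w)\le\theta-2$, while the $T_d$-neighbours of $v_i$ outside $S$ consist of its parent (present only if $i\ge1$) together with the $d-(d-\theta+3)=\theta-3$ children not chosen (for $i=0$ there is no parent and $(d+1)-(d-\theta+3)=\theta-2$ children are not chosen), a total of at most $\theta-2$. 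Since $v_i\in S\cap P$, this contradicts $P$ meeting no rigid $(\theta-2)$-fort. It is precisely here that working with rigid $(\theta-2)$-forts, rather than $(\theta-1)$-forts as the analogous step of~\cite{balogh_peres_pete_2006} would suggest, matters: a path vertex has two neighbours --- its parent and its path-child $v_{i+1}$ --- that are forced out of $S$, leaving a budget of $(\theta-2)-2$ and hence the threshold $d-\theta+3$ for good children.

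For (ii), note that $A_i$ is measurable with respect to $\xi_0$ restricted to $R_i:=\{v_i\}\cup\bigcup_u T_u$, the union over children $u\ne v_{i+1}$ of $v_i$; since $v_{i+1}\notin R_i$ whereas $R_j\subseteq T_{v_{i+1}}$ for all $j>i$, the sets $R_0,\dots,R_n$ are pairwise disjoint, and the $A_i$ are therefore independent. For (iii), condition on $\xi_0(v_i)$: the children $u\ne v_{i+1}$ of $v_i$ are then independently good, each with probability $(1-1/\kappa)(1-\mathbb P(E))\ge(1-1/\kappa)(1-d^{-2})$, using Lemma~\ref{rigid subtree} together with the fact --- noted in its proof --- that the event ``$T_u$ contains a rigid $(\theta-2)$-fort including $u$'' is independent of $\xi_0(u)$. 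Consequently the number of good children of $v_i$ other than $v_{i+1}$ stochastically dominates $\mathrm{Binom}\big(m,(1-1/\kappa)(1-d^{-2})\big)$ with $m\in\{d-1,d\}$, and since $\mathrm{Binom}(d,p)$ stochastically dominates $\mathrm{Binom}(d-1,p)$ we get
\[
\mathbb P(A_i)\le\mathbb P\!\left(\mathrm{Binom}\big(d-1,(1-1/\kappa)(1-d^{-2})\big)\le d-\theta+2\right)\le d^{-2}
\]
by~\eqref{side event} of Lemma~\ref{cdf bounds}. This yields $\mathbb P(E_n)\le d^{-n}$.

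It remains to deduce the almost sure statement. The events $E_n$ are nested decreasing --- truncating a non-backtracking path that meets no rigid $(\theta-2)$-fort leaves a shorter path with the same property --- so $\mathbb P\big(\bigcap_n E_n\big)=\lim_{n}\mathbb P(E_n)=0$. By K\"onig's lemma, $\bigcap_n E_n$ is exactly the event that some infinite non-backtracking path from $\rho$ meets no rigid $(\theta-2)$-fort (the finite path prefixes that avoid all forts form an infinite, finitely branching tree, whose infinite branch avoids all forts because each of its finite prefixes does). Hence almost surely every infinite non-backtracking path from $\rho$ meets a rigid $(\theta-2)$-fort. The main obstacle is step (i): verifying that the set $S$ assembled from the forts living in the good children's subtrees is genuinely a rigid $(\theta-2)$-fort of the whole tree $T_d$, not merely of a subtree --- this is what produces the ``off-by-one'' threshold $d-\theta+3$ at path vertices and forces the slightly weaker estimate~\eqref{side event} rather than~\eqref{main event}. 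A secondary subtlety is engineering the definition of $A_i$ so that it is at once implied by ``$P$ avoids all forts'' and independent across $i$, which is why one counts only the children of $v_i$ distinct from $v_{i+1}$.
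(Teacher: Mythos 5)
Your proof is correct and follows essentially the same route as the paper's: the event $A_i$ (too few ``good'' children of $v_i$ off the path) is exactly the paper's event $E'$ localized at $v_i$, the independence and probability bounds come from the same disjoint-subtree structure together with Lemmas~\ref{rigid subtree} and~\ref{cdf bounds}, and the union bound over $(d+1)d^{n-1}$ paths gives the same estimate. The only cosmetic differences are that you use K\"onig's lemma plus monotonicity of $E_n$ where the paper invokes Borel--Cantelli, and you uniformly apply the threshold $d-\theta+2$ with~\eqref{side event} at the endpoints (which is in fact slightly more careful than the paper's treatment of $F_0$ and $F_n$).
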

\begin{proof}
By Lemma \ref{rigid subtree}, if $E$ is the event that an infinite rooted $d$-ary tree does not contain a rigid $(\theta-2)$-fort beginning at the root in the initial configuration, then $\mathbb P (E) \leq \frac{1}{d^2}$ for sufficiently large $d$. 
Now let $E'$ be the event that in an infinite rooted tree with $d-1$ children at the root and $d$ children everywhere else, there does not exist a rigid subtree $S$ that contains the root, $(d-\theta+3)$ children of the root and $(d-\theta+2)$ children of every other vertex in $S$. Note that such a rigid subtree exists if and only if the root of this tree has at least $(d-\theta+3)$ rigid children and, in the forest obtained by removing the root, at least $(d-\theta+3)$ of these must be contained in rigid $(\theta-2)$-forts. Therefore, we have
\begin{align*}
\mathbb P (E') &=\mathbb P\left(\text{Binom}\left(d-1, (1-\mathbb P(E))\left(1-1/\kappa\right)\right) \leq d-\theta+2\right)\\
&\leq \mathbb P\left(\text{Binom}\left(d-1, (1-d^{-2})\left(1-1/\kappa\right)\right) \leq d-\theta+2 \right)\leq \frac{1}{d^2}.
\end{align*}
The first inequality follows from Lemma \ref{rigid subtree} and the fact that the Binomial cdf is decreasing in its success probability. The second inequality follows from \eqref{side event} in Lemma \ref{cdf bounds}.

Let $(v_\ell)_0^{n}$ be a non-backtracking path in $T_d$ of length $n$ such that $v_0=\rho$, and define the events 
$$
F_k := \{v_k \text{ is in a rigid $(\theta-2)$-fort disjoint from } \{v_\ell: \ell\neq k \}\}
$$
 for $k= 0,\dots, n$. For fixed $n$, note that $F_0, \ldots, F_n$ are independent. Additionally, $\mathbb P(F_k) \geq1- \mathbb P(E')$ for $k=1, \ldots, n-1$ and $\mathbb P(F_0) = P(F_n) \geq 1 - \mathbb P(E)$. So we have
\begin{align*}
\mathbb P((v_k)_0^n &\text{ does not intersect a rigid $(\theta-2)$-fort})\\
&\leq \mathbb P \left(\bigcap_{k=0}^n F_k^c\right) =\prod_{k=0}^n \left[1 - \mathbb P(F_k) \right]\leq \mathbb P(E)^2 P (E')^{n-1} \leq  \left(\frac{1}{d^{2}}\right)^{n+1}.
\end{align*}
Let $E_n$ be the event that at least one such path of length $n$ does not intersect a rigid $(\theta-2)$-fort. Then
$$\mathbb P(E_n) \leq (d+1)d^n \cdot\left(\frac{1}{d^{2}}\right)^{n+1} \leq \left(\frac{1}{d}\right)^n$$
and so 
$$ \sum_{n=0}^\infty \mathbb P(E_n)  \leq \sum_{n=0}^\infty\left(\frac{1}{d}\right)^n< \infty.$$
Thus by the Borel-Cantelli lemma, all infinite non-backtracking paths will intersect a rigid $(\theta-2)$-fort almost surely. 
\end{proof}

The last ingredient before the proofs that CCA and GHM fixate is that $(\xi_t)$ and $(\gamma_t)$ fixate on finite trees.

\begin{lemma}\label{fix lemma} Suppose $\theta\ge 2$ and let $T$ be a finite, rooted tree.
\begin{enumerate}
\item[i)]  If $\deg(v) \le \kappa (\theta-1)$ for all $v\in T$, then $(\xi_t)$  fixates on $T$ for all initial colorings $\xi_0 \in \{0,\ldots, \kappa-1\}^T$.
\item[ii)] If $T$ has depth $n$, then for every initially coloring $\gamma_0\in \{0,\ldots, \kappa-1\}^T$ and every $v\in T$ we have $\gamma_t(v) \neq 1$ for all $t\ge n+1$, and $\gamma_t(v)=0$ for all $t\ge n+\kappa$.
\end{enumerate}
\end{lemma}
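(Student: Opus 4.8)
The plan is to handle the two parts separately: part (ii) is a short induction on tree height, while part (i) needs a global counting argument on the set of fluctuating vertices.

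For part (ii), I would induct on the height $h(v)$ of the subtree of $T$ rooted at $v$ (so $h(\rho)=n$ and $h(v)\le n$ for every $v$), proving that $\gamma_t(v)\ne 1$ for all $t\ge h(v)+1$. A leaf has only its parent as a neighbor, hence can never accumulate the $\theta\ge 2$ excited neighbors needed to become excited, which gives the base case $h(v)=0$. For the inductive step, every child $w$ of $v$ has $h(w)\le h(v)-1$, so by the inductive hypothesis $\gamma_t(w)\ne 1$ once $t\ge h(v)$; therefore for all $t\ge h(v)$ the only neighbor of $v$ that can be in state $1$ is its parent, which is fewer than $\theta$, so $v$ cannot enter state $1$ and $\gamma_{t+1}(v)\ne 1$. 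This yields $\gamma_t(v)\ne 1$ for all $t\ge n+1$ and all $v$. Finally, once state $1$ can no longer occur at $v$ after time $n+1$, the color of $v$ at time $n+1$ is either $0$ (and then it stays $0$, since leaving state $0$ requires passing through state $1$) or some $c\in\{2,\dots,\kappa-1\}$, in which case the GHM rule increments $v$ deterministically until it reaches $0$ within $\kappa-c\le\kappa-2$ further steps and then stays there; either way $\gamma_t(v)=0$ for all $t\ge n+\kappa$.

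For part (i), the plan is to argue by contradiction: let $S$ be the set of vertices that change color infinitely often and suppose $S\ne\emptyset$. No leaf lies in $S$, by the degree count above. Fix $v\in S$. Since $v$ changes color infinitely often it passes through every color, so for each color $c$ it changes color while in state $c$ infinitely often, and at each such time at least $\theta$ of its neighbors are colored $c+1$. A pigeonhole argument---if fewer than $\theta$ neighbors were colored $c+1$ at infinitely many of these times, then all but finitely many of them would violate the CCA rule---produces, for each $c$, a set $R_c(v)$ of at least $\theta$ neighbors, each colored $c+1$ at infinitely many of the times $v$ advances out of $c$. The key point is that a neighbor $u\notin S$, being eventually some fixed color $c_u$, can lie in $R_c(v)$ only for $c=c_u-1$, hence in at most one of the $\kappa$ sets $R_c(v)$, whereas a neighbor $u\in S$ can lie in at most $\kappa$ of them. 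Counting with multiplicity,
$$
\kappa\theta\;\le\;\sum_{c=0}^{\kappa-1}|R_c(v)|\;\le\;\deg_{V\setminus S}(v)+\kappa\,\deg_S(v),
$$
and combining this with $\deg_{V\setminus S}(v)+\deg_S(v)=\deg(v)\le\kappa(\theta-1)$ gives $(\kappa-1)\deg_S(v)\ge\kappa$, so $\deg_S(v)\ge 2$.

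Thus every vertex of the non-empty finite set $S$ has at least two neighbors in $S$, so the subgraph of $T$ induced on $S$ is a finite graph of minimum degree at least $2$; but it is also a forest, being a subgraph of the tree $T$, and a finite forest cannot have minimum degree $\ge 2$. This contradiction shows $S=\emptyset$, i.e.\ $(\xi_t)$ fixates. I expect the main obstacle to be the bookkeeping in part (i): making the pigeonhole extraction of the sets $R_c(v)$ rigorous, and carefully justifying the ``one color-slot per eventually-fixed neighbor, $\kappa$ slots per fluctuating neighbor'' count that produces the decisive inequality $\kappa\theta\le\deg_{V\setminus S}(v)+\kappa\deg_S(v)$.
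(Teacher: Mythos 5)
Your proof is correct. Part (ii) is essentially the paper's argument: both transfer non-excitation inward from the leaves by induction (you index by the height of the subtree at $v$, the paper indexes by time, but the content is identical), and the final step converting ``never excited after time $n+1$'' into ``zero after time $n+\kappa$'' is the same. Part (i), however, takes a genuinely different route. The paper peels the tree layer by layer: leaves are frozen, a local count at depth $n-1$ (at least $\theta-1$ frozen children of each color plus possibly the parent) forces $\deg(v)>\kappa(\theta-1)$ for a fluctuating vertex there, and then the dynamics are restarted on the truncated tree to apply the induction hypothesis. You instead argue globally: the set $S$ of fluctuating vertices, via the pigeonhole extraction of the witness sets $R_c(v)$ and the count $\kappa\theta\le\deg_{V\setminus S}(v)+\kappa\deg_S(v)$, would have to induce a subgraph of minimum degree at least $2$ inside the forest $T$, which is impossible for a finite nonempty set. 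Your version avoids the re-initialization step of the induction and makes transparent exactly which structure fluctuation would require (a min-degree-$2$ induced subgraph, which exists only in infinite trees --- nicely consistent with the rainbow-subtree mechanism driving Theorem~\ref{thm:fluctuation}); the cost is the extra bookkeeping in the pigeonhole step, which you correctly flag and which does go through (only finitely many neighbors, so the witnesses at all but finitely many advance times must come from the neighbors that serve infinitely often). The paper's induction is more elementary at each step but needs the observation that the truncated dynamics started from $\xi_N$ agree with the original ones.
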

\begin{proof}
For CCA $(\xi_t)$, we prove (i) by induction on the depth of the tree. If the tree has depth $n=0$, and consists of just a single node, then clearly $(\xi_t)$ fixates at time $0$. Suppose now that $(\xi_t)$ fixates for all $\xi_0$ on every tree with depth $n-1$ satisfying the degree condition, and let $T$ be a tree with depth $n$ satisfying the degree condition. Observe that it suffices to show that all vertices in $T$ at depth at least $n-1$ fixate by some step $N$. Indeed, let $T'$ be the depth-$(n-1)$ rooted subtree of $T$ with only the depth-$n$ vertices of $T$ removed, and let $(\xi'_t)$ be the CCA dynamics on $T'$ with $\xi'_0(v) = \xi_N(v)$ for $v\in T'$. Then since the vertices at depth $n-1$ in $T$ do not change colors after time $N$, we have $ \xi'_{t}(v) = \xi_{t+N}(v)$ for all $t\ge 0$ and $v\in T'$. Since $(\xi'_t)$ fixates on $T'$ by the induction hypothesis, we have that $(\xi_t)$ fixates on $T$. It remains to show that vertices at depth at least $n-1$ fixate.

Let $v\in T$ be a vertex at depth $n-1$ from the root. Since $\theta\ge 2$, all leaves of $T$ never change color, and this includes all vertices at depth $n$ from the root, and in particular all children of $v$. Suppose for the sake of contradiction that $v$ fluctuates. For each color $c\in \{0, \ldots, \kappa-1\}$, let $A(c) = \{ u : u \text{ is a child of $v$ and }\xi_0(u) = c\}$. If $n=1$, for fluctuation to occur we must have $|A(c)|\ge \theta$ for every $c$, and $\deg(v) = \sum_c |A(c)| \ge \kappa\theta > \kappa(\theta-1)$, which gives a contradiction. If $n\ge 2$, for $v$ to fluctuate we must have $|A(c)| \geq \theta-1$ for every $c$, and $ \deg(v)-1 = \sum_{c=0}^{\kappa-1} |A(c)| \geq  \kappa (\theta-1),$ so $\deg(v)>\kappa(\theta-1)$. This again gives a contradiction, so $v$ must fixate in $(\xi_t)$.

For GHM, to prove (ii) we claim that if $T$ has depth $n$, then at time $t \in [0, n+1]$, no vertices at depth at least $n-t+1$ are excited (color $1$) at time $t$. It follows that at time $n+1$, there are no excited vertices in $T$, so $\gamma_t\equiv 0$ for all $t\ge n+\kappa$. We prove the claim by induction on $t$. Clearly there are no vertices at depth at least $n+1$, so the claim holds for $t=0$. Suppose there are no excited vertices at depth at least $n-t+1$ at time $t\in [0,n]$, and consider a vertex $v\in T$ at depth at least $n-t$. Then $v$ has at most one excited neighbor at time $t$, and therefore cannot be excited at time $t+1$. This completes the induction and the proof.
\end{proof}

We can now complete the proof of Theorem~\ref{thm:fixation}.  
\begin{proof}[Proof of Theorem~\ref{thm:fixation}]
We start by proving fixation for CCA. It suffices to show that $\rho$ fixates almost surely. Let $A = \{v\in T_d: v \text{ is in a rigid $(\theta -2)$-fort} \}$ and let $C_\rho$ be the connected component containing $\rho$ in $T_d\setminus A$. Lemma \ref{path rigid} implies $C_\rho$ is finite almost surely. Let $T$ be the induced subtree on the vertices $C_\rho\cup \partial C_\rho$, where $\partial C_\rho\subset A$ is the set of vertices outside $C_\rho$ with one neighbor in $C_\rho$, and we take $\partial C_\rho=\{\rho\}$ if $C_\rho=\emptyset$. By Lemma~\ref{lem:rigid k-fort}, the vertices in $\partial C_\rho$ never change color, which implies the dynamics on $T$ (with the initial coloring inherited from $T_d$, but ignoring vertices outside $T$) are the same as the dynamics on $T_d$ within $C_\rho\cup\partial C_\rho$. Since $\kappa\left(\theta-3\sqrt{d\ln(d)}\right)\geq d$, we have $\kappa(\theta-1) \ge d$, and Lemma \ref{fix lemma} implies that $(\xi_t)$ fixates on $T$ which contains $\rho$. This completes the proof for CCA.

Now we prove fixation and the exponential tail bound for GHM. Let $\tau=\sup\{t\ge 0 : \gamma_t(\rho)=1\}$, and let $A$, $C_\rho$, $\partial C_\rho$ and $T$ be as above. Let $E_n$ be the event that there exists a non-backtracking path of length $n$ starting at $\rho$ that does not intersect a rigid $(\theta-2)$-fort, so that Lemma~\ref{path rigid} implies $\mathbb{P}(E_n)\le d^{-n}$. Observe that $T$ has depth at least $n+1$ if and only if $E_n$ occurs. By Lemma~\ref{lem:rigid k-fort}, we have $\gamma_t(v)\neq 1$ for all $t\ge 1$ and $v\in \partial C_\rho$, and this property is maintained even if all vertices of $T_d$ outside $C_\rho\cup\partial C_\rho$ are initially removed. Therefore, the dynamics in $T$ and $T_d$ are the same on $C_\rho\cup\partial C_\rho$. Lemma~\ref{fix lemma} implies that for $n\ge 0$,
\begin{align*}
\mathbb{P}(\tau\ge n+1) \le \mathbb{P}(T \text{ has depth at least } n+1) = \mathbb{P}(E_{n}) \le d^{-n},
\end{align*}
which completes the proof.
\end{proof}

\section{Proof of Fluctuation}\label{proof of fluc}
We prove fluctuation on $T_d$ using similar percolation arguments, except now ``rainbow'' subtrees take the place of rigid $k$-forts. We state the definition for CCA, but it applies in the same way to GHM.

\begin{definition}\label{rainbow def}
Let $T$ be an infinite tree and $\xi_0\in \{0,\ldots, \kappa-1\}^{T}$. We call a rooted subtree $R\subseteq T$ a \textbf{rainbow subtree} of $T$ (for $\xi_0$) iff for each $v \in R$ and each child $w\in R$ of $v$ we have $\xi_0(w) = \xi_0(v) +1$.
\end{definition}
The following simple observation justifies the definition.
\begin{lemma}\label{lem:rainbow flux}
If $R$ is an infinite $\theta$-ary rainbow subtree of $T_d$ for $\xi_0$ (resp. $\gamma_0$), then $\xi_{t+1}(v) = \xi_t(v)+1$ (resp. $\gamma_{t+1}(v) = \gamma_t(v)+1$) for all $t\ge 0$ and $v\in R$.
\end{lemma}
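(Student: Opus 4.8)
The plan is to show that each vertex $v$ in an infinite $\theta$-ary rainbow subtree $R$ always has at least $\theta$ children in $R$, and that these children carry precisely the color $\xi_t(v)+1$ at every time $t$, so that the update rule forces $v$ to advance by one. First I would set up an induction on $t$ with the induction hypothesis that for all $v\in R$, $\xi_t(v) = \xi_0(v) + t$ (addition mod $\kappa$). The base case $t=0$ is immediate. For the inductive step, fix $v\in R$; since $R$ is $\theta$-ary, $v$ has at least $\theta$ children $w_1,\dots,w_\theta$ in $R$, and by the rainbow property $\xi_0(w_i) = \xi_0(v)+1$, hence by the induction hypothesis $\xi_t(w_i) = \xi_0(w_i)+t = \xi_0(v)+1+t = \xi_t(v)+1$. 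Thus $v$ has at least $\theta$ neighbors of color exactly $\xi_t(v)+1$, so the CCA rule gives $\xi_{t+1}(v) = \xi_t(v)+1 = \xi_0(v)+t+1$, completing the induction. In particular $\xi_{t+1}(v) = \xi_t(v)+1$ for all $t\ge 0$ and $v\in R$.

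For the GHM case the argument is parallel but requires a short case analysis because the update rule depends on whether $\gamma_t(v)=0$. I would again induct on $t$ with hypothesis $\gamma_t(v) = \gamma_0(v)+t \pmod\kappa$ for all $v\in R$. In the inductive step, if $\gamma_t(v) \ge 1$ then the GHM rule unconditionally advances $v$, giving $\gamma_{t+1}(v) = \gamma_t(v)+1$. If $\gamma_t(v) = 0$, then $\gamma_t(v)+1 = 1$, and the same computation as above shows the $\ge\theta$ children of $v$ in $R$ all have color $\gamma_t(v)+1 = 1$, i.e.\ they are excited, so $v$ has at least $\theta$ excited neighbors and the rule sends it to $1 = \gamma_t(v)+1$. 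Either way $\gamma_{t+1}(v) = \gamma_t(v)+1$, closing the induction.

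The only subtlety — and it is minor — is making sure the rainbow/arity hypotheses are used correctly: ``$\theta$-ary'' must be read as ``every vertex of $R$ has at least $\theta$ children in $R$'' (not exactly $\theta$), which is what lets us meet the threshold $\theta$, and the rainbow condition is exactly what guarantees those children start at the successor color; infinitude of $R$ is what lets the conclusion hold for all $t$ without worrying about leaves. There is no real obstacle here: the statement is essentially a restatement of the update rules once one observes that a rainbow subtree is self-sustaining, so the proof is a straightforward simultaneous induction over all $v\in R$ for each $t$.
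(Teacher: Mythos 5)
Your proof is correct: the paper states this lemma without proof (calling it a ``simple observation''), and your simultaneous induction on $t$ over all of $R$, with the case split on $\gamma_t(v)=0$ for GHM, is precisely the intended argument. Your remark that neighbors outside $R$ cannot interfere (the update rules only require \emph{at least} $\theta$ suitable neighbors) and that infinitude/arity guarantee every vertex of $R$ has $\theta$ children in $R$ covers the only points worth checking.
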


We will show that an infinite $d$-ary tree has an infinite $\theta$-ary rainbow subtree that contains the root with large probability, then use this to show that almost surely every infinite non-backtracking path in $T_d$ intersects an infinite $\theta$-ary rainbow subtree. We will then show that finite trees with leaves that increment their colors at every step must fluctuate and eventually be $\kappa$-periodic. Finally, we will put these pieces together to complete the proof of fluctuation.

\begin{lemma}\label{rainbow subtree}
Let $d\ge 2$, $\theta\ge 2$ and $\kappa\ge 3$. Suppose $E$ is the event that an infinite rooted $d$-ary tree does not contain an infinite $\theta$-ary rainbow subtree that contains the root in the initial coloring. If $\kappa (\theta-1) \leq d - \sqrt{6d\kappa\ln(d)}$, then $\mathbb P(E) \le \frac{1}{d^2}$.
\end{lemma}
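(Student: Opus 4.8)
The plan is to run the argument of Lemma~\ref{rigid subtree} with infinite $\theta$-ary rainbow subtrees in place of rigid $(\theta-2)$-forts. Root the $d$-ary tree at $\rho$ and condition on $\bar\xi(\rho)$. Say a vertex $v$ is \emph{good} if the subtree hanging from $v$ contains an infinite $\theta$-ary rainbow subtree rooted at $v$, so that $\mathbb P(E)=\mathbb P(\rho\text{ is not good})$. Then $v$ is good if and only if at least $\theta$ of its $d$ children $w$ satisfy both $\bar\xi(w)=\bar\xi(v)+1$ and ``$w$ is good''. The event ``$w$ is good'' depends only on $\bar\xi(w)$ together with the colors strictly below $w$, but it is invariant under adding a constant to all of those colors, so its conditional probability given $\bar\xi(w)$ equals $1-\mathbb P(E)$ for \emph{every} value of $\bar\xi(w)$; moreover, over the $d$ children of $v$ these events involve pairwise disjoint subtrees. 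Hence, conditionally on $\bar\xi(v)$, the number of children $w$ that are good and satisfy $\bar\xi(w)=\bar\xi(v)+1$ is $\text{Binom}\bigl(d,\tfrac1\kappa(1-\mathbb P(E))\bigr)$, so that $\mathbb P(E)$ is a fixed point of
$$
B(x):=\mathbb P\!\left(\text{Binom}\!\left(d,\tfrac{1-x}{\kappa}\right)\le\theta-1\right),\qquad x\in[0,1].
$$

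As in Lemma~\ref{rigid subtree}, $B$ is continuous and nondecreasing on $[0,1]$ (the binomial cdf at a fixed value is decreasing in the success probability), with $B(1)=1$ and $B(0)>0$. If $y_n$ is the probability that $\rho$ is not the root of a $\theta$-ary rainbow subtree of depth $n$, then $y_0=0$, $y_{n+1}=B(y_n)$, and by K\"onig's lemma $y_n\uparrow\mathbb P(E)$; Kleene's fixed-point theorem then identifies $\mathbb P(E)$ as the least fixed point of $B$. It therefore suffices to exhibit some $x_0\in(0,1)$ with $B(x_0)\le x_0$, since then $B^{(n)}(0)\le x_0$ for all $n$ and hence $\mathbb P(E)\le x_0$. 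I would take $x_0=d^{-2}$, reducing the lemma to the single estimate
$$
B(d^{-2})=\mathbb P\!\left(\text{Binom}\!\left(d,\tfrac{1-d^{-2}}{\kappa}\right)\le\theta-1\right)\le d^{-2}.
$$

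This is a left-tail bound with $\theta-1$ lying below the mean $\mu:=\frac d\kappa(1-d^{-2})$, so I would apply the Chernoff bound (Lemma~\ref{chernoff bound}) with $\delta:=1-\frac{\theta-1}{\mu}$. The hypothesis $\kappa(\theta-1)\le d-\sqrt{6d\kappa\ln d}$ rewrites as $\theta-1\le\frac d\kappa-\sqrt{6d\ln d/\kappa}$, which comfortably forces $\kappa(\theta-1)<d-1/d$ and hence $\delta\in(0,1)$, and which gives $\mu-(\theta-1)\ge\sqrt{6d\ln d/\kappa}-\frac1{\kappa d}$. Using $\mu\le d/\kappa$ one then obtains
$$
\delta^2\mu=\frac{(\mu-(\theta-1))^2}{\mu}\ \ge\ 6\ln d-\frac{2}{d^{3/2}}\sqrt{6\ln d}\ \ge\ 4\ln d,
$$
valid as long as $\ln d\cdot d^{3}\ge 6$, i.e.\ for all $d\ge 3$, whence $B(d^{-2})\le e^{-\delta^2\mu/2}\le e^{-2\ln d}=d^{-2}$. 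For $d=2$ (indeed for all $d$ up into the eighties) the right-hand side of the hypothesis is negative, so there is nothing to check there.

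I do not anticipate a deep obstacle. The step that needs care is the reduction in the first paragraph: one must verify that the existence of an infinite $\theta$-ary rainbow subtree rooted at $\rho$ is governed exactly by the Galton--Watson recursion with offspring law $\text{Binom}(d,\frac1\kappa(1-\mathbb P(E)))$ and ``survival'' meaning ``at least $\theta$ offspring'', which rests on the color-shift invariance and the branching independence noted above. The only quantitative input is the one Chernoff estimate, and the constant $6$ in the hypothesis is chosen precisely to push $\delta^2\mu$ past $4\ln d$: the mean $\mu$ is of order $d/\kappa$ while the downward deviation $\mu-(\theta-1)$ is at least $\sqrt{6d\ln d/\kappa}$, so $\delta^2\mu$ is of order $6\ln d$, leaving room above the required $4\ln d$.
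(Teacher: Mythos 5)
Your proposal is correct and follows essentially the same route as the paper: the same Galton--Watson/fixed-point recursion for $B_2(x)=\mathbb P(\mathrm{Binom}(d,(1-x)/\kappa)\le\theta-1)$, identification of $\mathbb P(E)$ as the least fixed point via monotonicity and Kleene's theorem, and the single Chernoff estimate $B_2(d^{-2})\le d^{-2}$ (your parametrization of $\delta$ is equivalent to the paper's, and your arithmetic showing $\delta^2\mu\ge 4\ln d$ matches the paper's bound). The only cosmetic difference is that you make explicit the color-shift invariance and the K\"onig's-lemma step behind $y_n\uparrow\mathbb P(E)$, which the paper leaves implicit.
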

\begin{proof}

Let $E$ be the event that an infinite rooted $d$-ary tree $\mathcal{T}_d$ does not contain a rainbow subtree in the initial coloring, whose root coincides with the root of $\mathcal{T}_d$. Note that $E$ is independent of the color of the root. Let $\rho$ be the root of this $d$-ary tree. Given the color of $\rho$, we have a $\theta$-ary rainbow subtree  rooted at $\rho$ if and only if at least $\theta$ children of $\rho$ have color $\xi_0(\rho) +1$ and each is the root of a (disjoint) rainbow subtree. For each child of $\rho$, this occurs independently with probability $(1-\mathbb P(E))(\frac 1\kappa)$. It follows that $\mathbb P(E)$ is a fixed point of the function
$$B_2(x) = \mathbb P(\text{Binom}(d, (1-x)/\kappa) \leq \theta-1)$$
on $[0,1]$ (see Figure \ref{fig:B_2(x) plots}).  $B_2$ always has $x=1$ as a fixed point, and we will show that $\mathbb P(E)$ is always the smallest fixed point of $B_2$. Using calculus and binomial coefficient identities we see that
$$\frac{\partial}{\partial x} B_2(x) = \frac d\kappa \cdot \mathbb P\left(\text{Binom}\left(d-1, (1-x)/\kappa\right) = \theta-1\right).$$
So $B_2(x)$ is an increasing function on $[0,1]$, with $B_2(0) > 0$. Let $y_n$ be the probability that there is no finite (perfect) $\theta$-ary rainbow subtree of the finitie tree obtained by truncating $\mathcal{T}_d$ at level $n$. Then $y_0 = 0$, $y_{n+1} = B_2(y_{n})$, and $y_n \to \mathbb P(E)$ as $n\to\infty$. By Kleene's fixed-point theorem, $\mathbb P(E)$ is the smallest fixed point of $B_2$. Furthermore, if there exists $x_0\in (0,1)$ such that $B_2(x_0)\leq  x_0$, then $\mathbb P(E) \leq x_0$. 
To bound $B_2(x)$ we use Lemma \ref{chernoff bound} with 
$$\delta = \frac{d(1-x)- \kappa(\theta-1)}{d(1-x)}.$$
We have $\delta \geq 0$ whenever 
\begin{equation}\label{chernoff cond flux} x \leq \frac{d - \kappa(\theta-1)}{d}.\end{equation}
Since $\kappa(\theta-1) \leq d - \sqrt{6d\kappa \ln(d)}$, $x$ satisfies (\ref{chernoff cond flux}) when $x \leq \sqrt{\frac{6\kappa\ln(d)}{d}}$. Let $x_0 := d^{-2}$ and note that $d^{-2} \leq \sqrt{\frac{6\kappa\ln(d)}{d}}$ for all $d\ge 2$ and $\kappa\ge 3$, so

\begin{equation}\label{B_2 estimate}
\begin{aligned}
x_0^{-1} \cdot B_2(x_0)  &=x_0^{-1} \cdot  \mathbb P(\text{Binom}(d, (1-x_0)/\kappa) \leq \theta-1) \\
&\leq \exp\left[ -\frac{\left[ \frac{d}{\kappa}(1-x_0) - (\theta-1)\right]^2}{2\frac{d}{\kappa}(1-x_0)} -\ln(x_0) \right]\\
&= \exp\left[ -\frac{ \left[ d(1-x_0) - \kappa(\theta-1)\right]^2}{2d\kappa(1-x_0)} -\ln(x_0) \right]\\
&\le \exp\left[ -\frac{ \left[ d- \kappa(\theta-1)-dx_0 \right]^2}{2d\kappa} -\ln(x_0) \right]\\
&\leq \exp\left[ -\frac{ \left[ \sqrt{6d\kappa\ln(d)}-d^{-1} \right]^2}{2d\kappa} +2\ln(d) \right]\\
&= \exp\left[ -3\ln(d) \left[1 - 1/ \sqrt{6d^3\kappa\ln(d)}\right]^2  +2\ln(d) \right]\\
&< 1
\end{aligned}
\end{equation}
for $d\ge 2$ and $\kappa\ge 3$. Thus, $\mathbb P(E) \le \frac{1}{d^2}$ as desired.
\end{proof}

\begin{figure}[h!]
  \centering
  \begin{subfigure}[b]{0.45\linewidth}
    \includegraphics[width=\linewidth]{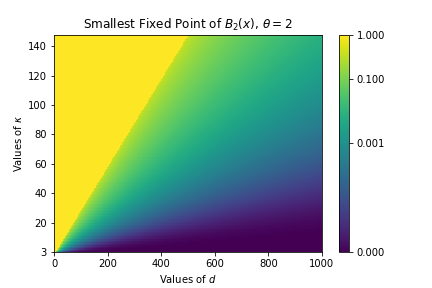}
  \end{subfigure}
  \begin{subfigure}[b]{0.45\linewidth}
    \includegraphics[width=\linewidth]{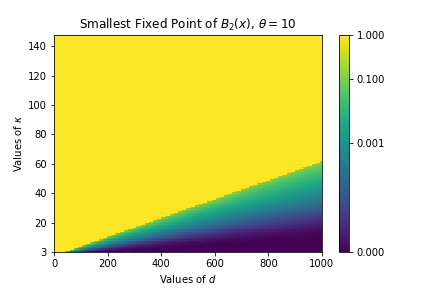}
  \end{subfigure}
  \caption{The value of the smallest fixed point of $B_2$ for $\theta=2$ (left) and $\theta=10$ (right) as a function of $\kappa$ and $d$. This fixed point is equal to the probability that an infinite $\theta$-ary rainbow subtree does not appear at the root of an infinite $d$-ary tree. The threshold value of $\kappa$, below which this probability is smaller than 1, appears to be linear in $d$ for small $\theta$.}
  \label{fig:B_2(x) plots}
\end{figure}

\begin{lemma}\label{rainbow path}
If $\theta\ge 2$, $\kappa\ge 3$, $d\ge 4$ and $\kappa (\theta-1)  \leq d - \sqrt{6d\kappa \ln(d)}$, then almost surely every infinite non-backtracking path from the root in $T_d$ will intersect an infinite $\theta$-ary rainbow subtree. 
\end{lemma}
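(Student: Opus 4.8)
The plan is to mirror the structure of the proof of Lemma~\ref{path rigid}, replacing rigid $(\theta-2)$-forts by infinite $\theta$-ary rainbow subtrees and using Lemma~\ref{rainbow subtree} in place of Lemma~\ref{rigid subtree}. Fix an infinite non-backtracking path $(v_\ell)_{\ell\ge 0}$ from $\rho$, and for each $k$ consider the event $F_k$ that $v_k$ is the root of an infinite $\theta$-ary rainbow subtree living inside the component of $T_d\setminus\{v_\ell:\ell\neq k\}$ that contains $v_k$ — that is, a rainbow subtree that grows into the $d-1$ (or $d$, at the root) subtrees hanging off $v_k$ and disjoint from the rest of the path. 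These events are determined by disjoint vertex sets, hence independent for a path truncated at any finite length $n$. The point is that if some $F_k$ occurs, the path intersects an infinite rainbow subtree, so the complementary bad event is contained in $\bigcap_k F_k^c$.

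Next I would bound $\mathbb{P}(F_k^c)$. For an interior vertex $v_k$, the subtrees hanging off it (excluding the direction of the path) are copies of $\mathcal{T}_d$, and $v_k$ fails to root such a rainbow subtree iff fewer than $\theta$ of its $d-1$ off-path children have the right color and themselves root a rainbow subtree in their copy of $\mathcal{T}_d$; each such child succeeds independently with probability $(1-\mathbb{P}(E))/\kappa$. Thus $\mathbb{P}(F_k^c) = \mathbb{P}(\mathrm{Binom}(d-1,(1-\mathbb{P}(E))/\kappa)\le\theta-1)$, which by monotonicity of the binomial cdf in the success probability and Lemma~\ref{rainbow subtree} is at most $\mathbb{P}(\mathrm{Binom}(d-1,(1-d^{-2})/\kappa)\le\theta-1)$. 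A Chernoff estimate entirely analogous to \eqref{B_2 estimate} — now with $d-1$ trials, which costs only a harmless lower-order term — shows this is at most $d^{-2}$ for $d\ge 4$; the root vertex $v_0$ has $d$ children and is even easier, giving $\mathbb{P}(F_0^c)\le \mathbb{P}(E)\le d^{-2}$. Hence $\mathbb{P}\big(\bigcap_{k=0}^n F_k^c\big)\le d^{-2(n+1)}$.

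Finally, let $E_n'$ be the event that \emph{some} non-backtracking path of length $n$ from $\rho$ avoids every infinite $\theta$-ary rainbow subtree. There are at most $(d+1)d^{n-1}\le d^{n+1}$ such paths, so $\mathbb{P}(E_n')\le d^{n+1}\cdot d^{-2(n+1)} = d^{-(n+1)}$, and $\sum_n \mathbb{P}(E_n')<\infty$. By Borel--Cantelli, almost surely only finitely many $E_n'$ occur, and since the events $E_n'$ are increasing in $n$ (a length-$(n+1)$ avoiding path contains a length-$n$ avoiding prefix), in fact almost surely no $E_n'$ occurs for large $n$; equivalently, every infinite non-backtracking path from $\rho$ meets an infinite $\theta$-ary rainbow subtree. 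The only mild subtlety — and the step most worth writing carefully — is checking that the Chernoff bound \eqref{B_2 estimate} survives the replacement of $d$ by $d-1$ under the stated hypothesis $\kappa(\theta-1)\le d-\sqrt{6d\kappa\ln d}$ with $d\ge 4$; everything else is bookkeeping of independence and a union bound.
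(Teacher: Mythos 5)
Your proposal is correct and follows essentially the same route as the paper: the same events $F_k$, the same independence structure, the same $\mathrm{Binom}(d-1,(1-\mathbb{P}(E))/\kappa)$ computation for interior path vertices, and the same union bound plus Borel--Cantelli. The only difference is quantitative and harmless: rather than re-deriving the Chernoff bound to get $\mathbb{P}(F_k^c)\le d^{-2}$, the paper simply applies \eqref{B_2 estimate} with $d$ replaced by $d-1$ to get $\mathbb{P}(F_k^c)\le (d-1)^{-2}\le 2/d^2$ for $d\ge 4$, which still yields a summable bound $(2/d)^n$.
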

\begin{proof}
By Lemma \ref{rainbow subtree}, if $E$ is the event that an infinite rooted $d$-ary tree does not contain a rainbow subtree beginning at the root in the initial configuration, then $\mathbb P (E) \leq \frac{1}{d^2}$ for $\kappa, d$ and $\theta$ satisfying the conditions of the lemma. Now let $E'$ be the event that an infinite rooted tree with $d-1$ children at the root and $d$ children of every other vertex does not contain an infinite $\theta$-ary rainbow subtree that shares the same root in the initial coloring. Then,
\begin{align*}
\mathbb P (E') &= \mathbb P( \text{Binom}(d-1, (1-\mathbb P(E))/\kappa) \leq \theta-1)\\
&\leq  \mathbb P( \text{Binom}(d-1, (1-(d-1)^{-2})/\kappa) \leq \theta-1)\leq \frac{1}{(d-1)^2} \leq \frac{2}{d^2}
\end{align*}
for $d\geq4$. In the first inequality we use Lemma \ref{rainbow subtree} and the fact that the Binomial cdf is decreasing in its success probability. The second inequality follows from~\eqref{B_2 estimate} after replacing $d$ by $d-1$.

Let $(v_k)_0^{n}$ be a non-backtracking path in $T_d$ of length $n$ such that $v_0=\rho$, and let 
$$
F_k = \left\{v_k \text{ is the root of an infinite $\theta$-ary rainbow subtree disjoint from $\{v_\ell : \ell\neq k\}$}\right\}
$$
 for $k= 0,\dots, n$. Note that for a given path, $F_0, \ldots, F_n$ are independent. Additionally, $\mathbb P(F_k) \geq1- \mathbb P(E')$ for all $k\in [1,n-1]$ and $\mathbb P(F_k) \geq 1 - \mathbb P(E)$ for $k=0,n$. Therefore,
\begin{align*}
&\mathbb P((v_k)_0^n \text{ does not intersect an infinite $\theta$-ary rainbow subtree})\\
&\hspace{2cm} \le \mathbb P \left(\bigcap_{k=0}^n F_k^c\right) 
=\prod_{k=0}^n 1 - \mathbb P(F_k) 
\leq \mathbb P (E)^2 \mathbb P (E')^{n-1} \leq \left(\frac{2}{d^{2}}\right)^{n+1}.
\end{align*}
Let $E_n$ be the event that there exists a non-backtracking path of length $n$ from $\rho$ that does not intersect an infinite $\theta$-ary rainbow subtree, then
$$\mathbb P(E_n) \leq (d+1)d^n \cdot\left(\frac{2}{d^{2}}\right)^{n+1} \leq \left(\frac{2}{d}\right)^{n}$$
and so 
$$ \sum_{n=0}^\infty \mathbb P(E_n)  \leq \sum_{n=0}^\infty \left(\frac{2}{d}\right)^{n}< \infty.$$
Thus by the Borel-Cantelli lemma, all infinite non-backtracking paths from $\rho$ will intersect an infinite $\theta$-ary rainbow subtree almost surely. 
\end{proof}

We now prove Theorem~\ref{thm:fluctuation}.

\begin{proof}[Proof of Theorem~\ref{thm:fluctuation}]
We give the proof for CCA, $(\xi_t)$, and the proof for GHM, $(\gamma_t)$, is identical. Let 
$$
A = \{v \in T_d: \exists N \in \mathbb{Z}_{\ge 0}, \; \xi_{t+1}(v) = \xi_t(v)+1 \;\; \forall t \geq N\},
$$
be the set of vertices that eventually change their state at every time-step. To prove Theorem~\ref{thm:fluctuation}, it is equivalent to show $A=T_d$ almost surely, for which it suffices to show that $\rho\in A$ almost surely. Let $C_\rho$ be the connected component containing the root $\rho$ in $T_d\setminus A$, where $C_\rho=\emptyset$ iff $\rho\in A$. Let $R\subseteq T_d$ be the set of all vertices that are contained in infinite $\theta$-ary rainbow subtrees. Lemma~\ref{lem:rainbow flux} implies $R\subseteq A$. By Lemma~\ref{rainbow path}, the connected component containing $\rho$ in $T_d\setminus R$ is finite almost surely, so $C_\rho\subseteq T_d\setminus A \subseteq T_d\setminus R$ is also finite almost surely. 

Suppose for the sake of contradiction that $C_\rho \neq \emptyset$. Let $v$ be a leaf in $C_\rho$, which exists since $C_\rho$ is finite. Since all $d$ children of $v$ in $T_d$ are in $A$, there exists $N \ge 0$ such that the children of $v$ all change colors at every time-step after time $N$. Since $\kappa < d/(\theta-1)$, by the pigeonhole principle, at each time $t$ there exists a set of at least $\theta$ children of $v$ that all share the same color. Call one such set of vertices $S_t$. For each $t\geq N$, since the children of $v$ increment their colors at every step we can choose $S_t = S_N =: S$. Since $v\in C_\rho\subseteq T_d\setminus A$, the collection of times at which $v$ does not change colors is infinite, so after time $N$ the color of the vertices in $S$ will eventually catch up to the color of $v$. That is, there exists $N' \geq N$ such that $\xi_{N'}(c) = \xi_{N'}(v) +1$ for all $c\in S$. Since $|S|\ge \theta$, we have $\xi_{N'+1}(v) = \xi_{N'}(v)+1$, and since $N'\ge N$, we have $\xi_{N'+1}(c) = \xi_{N'}(c)+1 =  \xi_{N'}(v) +2$. By induction, we have $\xi_{t+1}(v) = \xi_{t}(v)+1$ for all $t\ge N'$, so $v \in A$, but this contradicts $v\in C_\rho$. Therefore, $C_\rho = \emptyset$ almost surely.
\end{proof}

\section{Fixation for smaller $\theta$}

\subsection*{Fixation for CCA}
We begin with the definition of a strongly rigid set, which replaces the role of a rigid set for smaller $\theta$.

\begin{definition}\label{strongly rigid}
Let $T$ be a rooted tree and $\xi_0\in \{0,\ldots, \kappa-1\}^{T}$. We say a set of vertices $S\subseteq T$ is \textbf{strongly rigid} iff for any pair of neighboring vertices $u,v\in S$ we have $\xi_0(v) - \xi_0(u) \neq 1$.
Furthermore, we say a non-root vertex is strongly {rigid} if the set containing it and its parent is strongly rigid.
\end{definition}

For large values of $\theta$, we seek rigid $(\theta-2)$-forts to guarantee fixation, but for smaller values of $\theta$ and larger $\kappa$, it is more likely that we will find strongly rigid $(\theta-1)$-forts in the initial configuration, which also guarantee fixation, as stated in the next lemma. The proof of Lemma~\ref{lem:strongly rigid k-fort} is nearly the same as the proof of Lemma~\ref{lem:rigid k-fort}, so we omit it.

\begin{lemma} \label{lem:strongly rigid k-fort}{\ }
\begin{enumerate}
\item[i)] For CCA: If $S\subseteq T_d$ is a strongly rigid $(\theta-1)$-fort for $\xi_0$, then $(\xi_t(v))_{t\ge0}$ is a constant sequence for every $v\in S$.
\item[ii)] For GHM: If $S\subseteq T_d$ is a strongly rigid $(\theta-1)$-fort for $\gamma_0$, then $\gamma_t(v) \neq 1$ for all $t\ge 1$ and $v\in S$, and so $\gamma_t(v)=0$ for all $t\ge \kappa-1$ and $v\in S$.
\end{enumerate}
\end{lemma}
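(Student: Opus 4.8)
The plan is to mimic the proof of Lemma~\ref{lem:rigid k-fort} essentially verbatim, with ``rigid'' replaced by ``strongly rigid'' and the fort parameter $\theta-2$ replaced by $\theta-1$, noting that the extra ``+1'' of tolerable outside-degree is exactly compensated by the fact that in a strongly rigid set we no longer get a free pass on the edge to the parent. For part (i), suppose toward a contradiction that $t\ge 1$ is minimal such that some $v\in S$ has $\xi_t(v)\neq\xi_{t-1}(v)$; then $v$ has at least $\theta$ neighbors $w$ with $\xi_{t-1}(w)-\xi_{t-1}(v)=1$. Since $v$ has at most $\theta-1$ neighbors in $V\setminus S$, at least one such $w$ lies in $S$. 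By minimality of $t$, none of the vertices involved have changed color before time $t$, so $\xi_0(w)-\xi_0(v)=\xi_{t-1}(w)-\xi_{t-1}(v)=1$, and $w,v$ are neighbors in $S$, contradicting strong rigidity of $S$ (here it is irrelevant whether $w$ is the parent or a child of $v$, which is precisely why the threshold improves from $\theta-2$ to $\theta-1$).

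For part (ii), argue by induction on $t$ exactly as before. For the base case, if $\gamma_1(v)=1$ for some $v\in S$ then $\gamma_0(v)=0$ and $v$ has at least $\theta$ neighbors $w$ with $\gamma_0(w)=1$; since at most $\theta-1$ of these lie outside $S$, some $w\in S$ has $\gamma_0(w)=1=\gamma_0(w)-\gamma_0(v)$, contradicting strong rigidity. For the inductive step, assuming $\gamma_t(v)\neq 1$ for all $v\in S$, each $v\in S$ has at most $\theta-1$ neighbors outside $S$ and hence at most $\theta-1<\theta$ neighbors in state $1$ at time $t$, so $\gamma_{t+1}(v)\neq 1$. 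It follows as before that any initial nonzero values in $S$ reset to $0$ within $\kappa-1$ steps and $0$'s persist, giving $\gamma_t(v)=0$ for all $t\ge\kappa-1$ and $v\in S$.

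There is no real obstacle here; the only point requiring a moment's care is confirming that dropping the ``at most one neighbor in $S$ is a parent'' observation is exactly offset by raising the allowed outside-degree to $\theta-1$, so that the count $|\{$dangerous neighbors in $S\}|\ge\theta-(\theta-1)=1$ still goes through. This is why the authors say the proof is ``nearly the same'' and omit it.
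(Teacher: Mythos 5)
Your proof is correct and is exactly the adaptation the paper has in mind: the paper omits the proof of this lemma, stating only that it is ``nearly the same'' as that of Lemma~\ref{lem:rigid k-fort}, and your reconstruction carries out precisely that adaptation, correctly noting that the increase of the fort parameter from $\theta-2$ to $\theta-1$ is offset by strong rigidity applying to the parent edge as well.
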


Instead of a Chernoff bound, for smaller $\theta$ we use the following binomial bound.

\begin{lemma}\label{binomial bound}
Let $X$ be a binomially distributed random variable with mean $\mu$. Then for any $k\ge 1$,
$$
\mathbb P(X\ge k) \le \left(\frac{\mu e}{k}\right)^k.
$$
\end{lemma}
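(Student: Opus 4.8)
The plan is to prove this standard binomial tail bound by a first-moment (union bound) argument over the coordinates, combined with the elementary inequality $\binom{n}{k}\le (ne/k)^k$.

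First I would fix a representation $X=\sum_{i=1}^n Y_i$, where $n\ge 1$, the $Y_i$ are independent Bernoulli$(p)$ random variables, and $\mu=np$. The key observation is that the event $\{X\ge k\}$ is contained in the union, over all $k$-element subsets $S\subseteq\{1,\dots,n\}$, of the events $\{Y_i=1\text{ for every }i\in S\}$. By independence each of these events has probability $p^k$, and there are $\binom{n}{k}$ of them, so a union bound gives
$$
\mathbb P(X\ge k)\le \binom{n}{k} p^k.
$$

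Next I would bound the binomial coefficient. We have $\binom{n}{k}\le n^k/k!$, and $k!\ge (k/e)^k$ because $e^k=\sum_{j\ge 0}k^j/j!\ge k^k/k!$. Hence $\binom{n}{k}\le (ne/k)^k$, and substituting into the previous display yields
$$
\mathbb P(X\ge k)\le (ne/k)^k\, p^k=(npe/k)^k=(\mu e/k)^k,
$$
which is the claim.

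There is no real obstacle here: the argument requires no case distinction on the sign of $k-\mu$ (when $k\le\mu$ the right-hand side already exceeds $1$ and the bound is vacuous, but the union bound proof is valid in all cases), and it uses only independence and the factorial estimate. If one prefers an exponential-moment proof, the same bound follows from $\mathbb P(X\ge k)\le e^{-sk}(1-p+pe^s)^n\le \exp\!\big(-sk+\mu(e^s-1)\big)$ optimized at $s=\ln(k/\mu)$, but the union bound is shorter and suffices for our purposes.
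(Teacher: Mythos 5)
Your proof is correct and is essentially the same as the paper's: both bound $\mathbb P(X\ge k)$ by $\binom{n}{k}p^k$ (your union bound over $k$-subsets is just the first-moment/Markov argument the paper uses on the expected number of all-success $k$-subsets) and then apply $k!\ge (k/e)^k$. Your derivation of the factorial bound from $e^k\ge k^k/k!$ is a slightly more elementary justification than citing Stirling, but the argument is otherwise identical.
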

\begin{proof}
Suppose $X$ counts the number of successes in $n$ independent trials with success probability $p$, so $\mu=np$. The expected number of size-$k$ subsets of the $n$ trials such that all $k$ trials are successes is ${n\choose k} p^k$, and Stirling's approximation gives $k!\ge (k/e)^k$, so Markov's inequality gives
$$
P(X\ge k) \le {n\choose k} p^k \le \frac{n^k}{k!} p^k \le \frac{(np)^k}{(k/e)^k} = \left(\frac{\mu e}{k}\right)^k.
$$
\end{proof}

\begin{lemma}\label{binomial small theta}
If $d\ge\theta\ge 3$ and $p = \frac{1}{3e} d^{-(\theta-1)/(\theta-2)}$ and $\kappa(\theta-2)\ge 9e d^{1 + \frac1{\theta-2}}$, then 
\begin{equation} \label{binomial small 1}
P\left(\textnormal{Binom}\left(d, (1-p)\left(1-2\kappa^{-1}\right)\right) \leq d-\theta \right) \le p,
\end{equation}
and
\begin{equation}\label{binomial small 2}\mathbb P \left(\textnormal{Binom}\left(d-1, (1-p)\left(1-2\kappa^{-1}\right)\right) \leq d-\theta+1\right) \leq \frac{2}{3d}.\end{equation}
If $d\ge 2$ and $q = \frac{1}{2}d^{-4}$ and $\kappa \ge 12d^3$, then
\begin{equation} \label{binomial small 3}
P\left(\textnormal{Binom}\left(d, (1-q)\left(1-2\kappa^{-1}\right)\right) \leq d-2 \right) \le q,
\end{equation}
and
\begin{equation}\label{binomial small 4}\mathbb P \left(\textnormal{Binom}\left(d-1, (1-q)\left(1-2\kappa^{-1}\right)\right) \leq d-2\right) \leq \frac{1}{3d^2}.\end{equation}
\end{lemma}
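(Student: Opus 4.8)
The plan is to bound each of the four lower-tail binomial probabilities by rewriting the event as an upper-tail event and applying Lemma~\ref{binomial bound}. For a random variable $X\sim\textnormal{Binom}(n,r)$, the complement $n-X\sim\textnormal{Binom}(n,1-r)$, so e.g. $\mathbb P(X\le d-\theta) = \mathbb P(n-X\ge n-d+\theta)$, and we can feed the latter into Lemma~\ref{binomial bound} with $k = n-d+\theta$ and $\mu = n(1-r)$. The key point is that with success probability $r = (1-p)(1-2\kappa^{-1})$ we have $1-r = p + 2\kappa^{-1} - 2p\kappa^{-1} \le p + 2\kappa^{-1}$, so the "failure" mean is at most $n(p+2\kappa^{-1})$, which is small relative to the required count $k$ once $\kappa$ and the lower bound on $\kappa(\theta-2)$ are large enough. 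Concretely, for \eqref{binomial small 1} we have $n=d$, $k = \theta$, and failure mean $\mu \le d(p + 2\kappa^{-1})$; plugging the hypothesized values $p = \frac{1}{3e}d^{-(\theta-1)/(\theta-2)}$ and $\kappa \ge \frac{9e}{\theta-2}d^{1+1/(\theta-2)}$ (so $2\kappa^{-1} \le \frac{2(\theta-2)}{9e}d^{-1-1/(\theta-2)}$) gives $\mu e/k \le \frac{e}{\theta}\cdot d\cdot(\text{something of order }d^{-(\theta-1)/(\theta-2)})$, and raising this to the power $\theta$ should land below $p$. The other three inequalities are handled the same way with the bookkeeping: \eqref{binomial small 2} uses $n = d-1$, $k = \theta-1$; \eqref{binomial small 3} uses $n=d$, $k=2$, $q = \frac12 d^{-4}$, $\kappa\ge 12 d^3$; \eqref{binomial small 4} uses $n=d-1$, $k=1$.

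The order I would carry this out: first establish the uniform estimate $1-r \le p + 2\kappa^{-1}$ (resp. $q + 2\kappa^{-1}$) and hence $\mu \le n(p+2\kappa^{-1})$; second, use the hypothesis on $\kappa$ to absorb the $2\kappa^{-1}$ term into the $p$ term (i.e. show $2\kappa^{-1}$ is at most a constant multiple of $p$, or directly that $p + 2\kappa^{-1} \le C p$ for a manageable constant $C$ — here the choice of $p$ as a power of $d$ matched to $\kappa(\theta-2)\asymp d^{1+1/(\theta-2)}$ is exactly what makes $p$ and $2\kappa^{-1}$ comparable); third, apply Lemma~\ref{binomial bound} with $k$ equal to the relevant small integer and simplify $(\mu e/k)^k$ using $\mu \le n\cdot Cp$ and $n\le d$; fourth, verify that the resulting power of $d$ is at most the target ($p$, $\frac{2}{3d}$, $q$, or $\frac{1}{3d^2}$ respectively), which reduces to checking an explicit inequality between powers of $d$ (and constants involving $e$, $\theta$) under the stated hypotheses — this is where one uses $d\ge\theta\ge 3$ (resp. $d\ge 2$) to control the constants. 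For \eqref{binomial small 1}, raising to the $\theta$-th power turns the factor $d^{-(\theta-1)/(\theta-2)}$ into $d^{-\theta(\theta-1)/(\theta-2)}$, and one needs $\theta(\theta-1)/(\theta-2) \ge 1 + (\theta-1)/(\theta-2)$, i.e. $(\theta-1)^2/(\theta-2)\ge 1$, which holds; the constant $\frac1{3e}$ versus $e$ from Lemma~\ref{binomial bound} is what the factor $9e$ (rather than, say, $3e$) in the hypothesis on $\kappa$ is there to absorb.

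The main obstacle is purely the constant-chasing in step four: making sure that after applying Lemma~\ref{binomial bound} the accumulated constants ($e$ from Stirling, the $\frac1{3e}$ or $\frac12$ in $p$ or $q$, the $9e$ or $12$ in the $\kappa$-bound, and the combinatorial factor $\binom{n}{k}$ when $k$ is $2$ or $3$ rather than using the crude $n^k/k!$) all line up so that the exponent and coefficient of $d$ come out on the correct side of the target. There is no conceptual difficulty — each inequality is a one-line application of Lemma~\ref{binomial bound} followed by algebra — but the four cases have slightly different $k$ and different targets, so each must be checked separately. A minor subtlety is that for \eqref{binomial small 3} and \eqref{binomial small 4} (the $\theta=2$ case) the exponents $-(\theta-1)/(\theta-2)$ degenerate, which is why $q$ and $\kappa$ are chosen as the explicit powers $d^{-4}$ and $d^3$ instead; one simply verifies these directly.
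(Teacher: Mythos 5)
Your proposal is correct in approach and matches the paper's proof essentially line for line: pass to the complementary binomial, apply Lemma~\ref{binomial bound}, bound the failure probability by $p+2\kappa^{-1}$ (resp.\ $q+2\kappa^{-1}$), and chase constants, with \eqref{binomial small 4} reduced to the first-moment ($k=1$) case exactly as the paper does. One bookkeeping slip to fix when executing: for \eqref{binomial small 2} the complementary count is $k=\theta-2$, not $\theta-1$, since $(d-1)-(d-\theta+1)=\theta-2$; this is what makes the target $\tfrac{2}{3d}$ emerge as $\left(\tfrac23 d^{-1/(\theta-2)}\right)^{\theta-2}$, and your exponent accounting in step four for \eqref{binomial small 1} should likewise track the full factor $d^{\theta}$ coming from $\mu^{\theta}$ (the needed inequality still holds comfortably).
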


\begin{proof} Rewriting the left side of~\eqref{binomial small 1} in terms of the complementary binomial distribution then applying Lemma~\ref{binomial bound}, we have

\begin{equation}\label{binsm1}
\begin{aligned}
\mathbb P\left(\textnormal{Binom}\left(d, (1-p)\left(1-2\kappa^{-1}\right)\right) \leq d-\theta \right) & \\
&\hspace{-3cm} = \mathbb P\left(\textnormal{Binom}\left(d, 1-(1-p)\left(1-2\kappa^{-1}\right)\right) \ge \theta \right) \\
&\hspace{-3cm} \le \mathbb P\left(\textnormal{Binom}\left(d, p+2\kappa^{-1}\right) \ge \theta \right)\\
&\hspace{-3cm} \le \left(\frac{d(p+2\kappa^{-1}) e}{\theta}\right)^{\theta}\\
&\hspace{-3cm} = \left(\frac{edp}{\theta}+\frac{2ed}{\kappa\theta} \right)^{\theta}.
\end{aligned}
\end{equation}
We verify that this is less than $p$ by showing each summand within the parentheses is smaller than $\frac12 p^{1/\theta}$. Since $\theta\ge 3$, we have
$$
\frac{edp}{\theta} = \frac{1}{3\theta d^{1/(\theta-2)}} \le \frac{1}{2}\cdot \frac{1}{(3e)^{1/\theta}} \cdot\frac{1}{d^{(\theta-1)/[\theta(\theta-2)]}} = \frac12 p^{1/\theta},
$$
which bounds the first summand. For the second summand, we have
$$
\frac{2ed}{\kappa\theta} \le \frac{2ed}{\kappa(\theta-2)}\le \frac{2ed}{9e d^{1 + \frac1{\theta-2}}} \le \frac{1}{2}\cdot \frac{1}{(3e)^{1/\theta}} \cdot\frac{1}{d^{(\theta-1)/[\theta(\theta-2)]}} = \frac12 p^{1/\theta},
$$
which completes the verification that the right side of~\eqref{binsm1} is at most $p$, and
 the proof of~\eqref{binomial small 1}. The proof of~\eqref{binomial small 3} is similar.

To prove~\eqref{binomial small 2}, we proceed in the same way as in~\eqref{binsm1} to obtain
\begin{equation}\label{binsm2}
\mathbb P \left(\textnormal{Binom}\left(d-1, (1-p)\left(1-2\kappa^{-1}\right)\right) \leq d-\theta+1\right) \leq \left(\frac{edp}{\theta-2}+\frac{2ed}{\kappa(\theta-2)} \right)^{\theta-2},
\end{equation}
where we have bounded $d-1$ above by $d$. We will bound from above each summand within the parentheses on the right side of~\eqref{binsm2} by $\frac13 d^{-1/(\theta-2)}$, which yields~\eqref{binomial small 2}, since $(2/3)^{\theta-2}\le 2/3$ for $\theta\ge 3$. For the first summand, we have
$$
\frac{edp}{\theta-2}  =  \frac{1}{3(\theta-2)} d^{-1/(\theta-2)} \le \frac13 d^{-1/(\theta-2)},
$$
and for the second summand we have
$$
\frac{2ed}{\kappa(\theta-2)} \le \frac{2ed}{9e d^{1 + \frac1{\theta-2}}} < \frac{1}{3} d^{-1/(\theta-2)},
$$
which completes the proof of~\eqref{binomial small 2}. 

The inequality~\eqref{binomial small 4} follows from a simple first-moment calculation.
\end{proof}

The analogues to Lemmas~\ref{rigid subtree} and~\ref{path rigid} are stated below, and we omit their proofs in the case $d\ge\theta\ge 3$ because they are nearly identical to their counterparts, but with Lemma~\ref{binomial small theta} used in place of Lemma~\ref{cdf bounds}. For $d\ge\theta=2$ we sketch the required modification. Recall that $\mathcal{T}_d$ denotes the infinite, full $d$-ary tree, and $\bar \xi$ is a uniform random coloring of the vertices in $\mathcal{T}_d$.
\begin{lemma}\label{strongly rigid subtree}
Let  $E$ be the event that $\bar\xi$ does not contain a strongly rigid $(\theta -1)$-fort that includes the root of $\mathcal{T}_d$. If $d\ge\theta\ge 3$ and $p = \frac{1}{3e} d^{-(\theta-1)/(\theta-2)}$ and $\kappa(\theta-2)\ge 9e d^{1 + \frac1{\theta-2}}$, then   $\mathbb P( E) \leq p$. If $d\ge \theta=2$ and $q= \frac12 d^{-4}$ and $\kappa\ge 12d^3$, then $\mathbb P(E)\le q$.
\end{lemma}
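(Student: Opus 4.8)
The plan is to mirror the proof of Lemma~\ref{rigid subtree}, replacing the rigid subtree with a strongly rigid subtree and tracking the altered offspring distribution. Given the color of a vertex $v$ in $\mathcal{T}_d$, for $v$ and one of its children $w$ to both lie in a strongly rigid set we need $\xi_0(w)-\xi_0(v)\neq 1$ \emph{and} $\xi_0(v)-\xi_0(w)\neq 1$, i.e.\ $w$ must avoid two colors relative to $v$ (note these two forbidden colors are distinct since $\kappa\ge 3$). Hence each child is ``strongly rigid'' independently with probability $1-2/\kappa$ rather than $1-1/\kappa$, and the relevant Galton--Watson offspring distribution is $\mathrm{Binom}(d,1-2/\kappa)$. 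As before, conditioning on the root color and removing the root leaves disjoint copies of $\mathcal{T}_d$, and the event $E^c$ is independent of the root color, so each child of $\rho$ independently is strongly rigid and heads a strongly rigid $(\theta-1)$-fort of $\mathcal{T}_d\setminus\{\rho\}$ with probability $(1-2/\kappa)(1-\mathbb{P}(E))$. The root $\rho$ lies in a strongly rigid $(\theta-1)$-fort iff it has at least $d-(\theta-1)=d-\theta+1$ such children (each vertex in the fort may have up to $\theta-1$ neighbors outside it, and $\rho$ has no parent in $\mathcal{T}_d$). Therefore $\mathbb{P}(E)$ is a fixed point of
$$
B_3(x) := \mathbb{P}\left(\mathrm{Binom}\left(d,(1-x)\left(1-2/\kappa\right)\right)\le d-\theta\right).
$$

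Next I would run the same Kleene fixed-point argument as in Lemma~\ref{rigid subtree}: compute $\frac{\partial}{\partial x}B_3(x) = d(1-2/\kappa)\,\mathbb{P}(\mathrm{Binom}(d-1,(1-x)(1-2/\kappa)) = d-\theta)\ge 0$, so $B_3$ is nondecreasing on $[0,1)$ with $B_3(0)>0$; define $y_0=0$, $y_{n+1}=B_3(y_n)$, which is the probability that $\rho$ is not in a strongly rigid $(\theta-1)$-fort of $\mathcal{T}_d$ truncated at depth $n$, and $y_n\uparrow \mathbb{P}(E)$. By monotonicity of $B_3$, $\mathbb{P}(E)$ is its least fixed point, and finding any $x_0\in(0,1)$ with $B_3(x_0)\le x_0$ yields $\mathbb{P}(E)\le x_0$. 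In the case $d\ge\theta\ge 3$, take $x_0=p=\frac{1}{3e}d^{-(\theta-1)/(\theta-2)}$; then $B_3(p)=\mathbb{P}(\mathrm{Binom}(d,(1-p)(1-2\kappa^{-1}))\le d-\theta)\le p$ is exactly~\eqref{binomial small 1} in Lemma~\ref{binomial small theta}, so $\mathbb{P}(E)\le p$. In the case $d\ge\theta=2$, take $x_0=q=\frac12 d^{-4}$; here $d-\theta=d-2$, so $B_3(q)=\mathbb{P}(\mathrm{Binom}(d,(1-q)(1-2\kappa^{-1}))\le d-2)\le q$ is precisely~\eqref{binomial small 3}, giving $\mathbb{P}(E)\le q$.

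The main thing to get right — and the only real subtlety — is the bookkeeping in the fort condition and the offspring success probability: verifying that ``strongly rigid'' costs a $1-2/\kappa$ per edge (not $1-1/\kappa$), that the two excluded colors are genuinely distinct when $\kappa\ge 3$, and that a strongly rigid $(\theta-1)$-fort containing the root requires $\ge d-\theta+1$ good children of $\rho$ (since $\mathrm{deg}_{V\setminus S}(v)\le\theta-1$ for all $v\in S$, and $\rho$ has degree $d$ in $\mathcal{T}_d$, while interior vertices of the constructed subtree have degree $d+1$ and must reserve one edge for the parent, leaving $d-\theta+1$ required children). These are the same ``+1'' accounting points as in Lemma~\ref{rigid subtree} shifted by one because we use $(\theta-1)$-forts instead of $(\theta-2)$-forts; once they are pinned down, the inequality $B_3(x_0)\le x_0$ is delivered verbatim by Lemma~\ref{binomial small theta}, so there is essentially no new estimation to do. The analogue of Lemma~\ref{path rigid} then follows by the same non-backtracking-path-plus-Borel--Cantelli argument, using~\eqref{binomial small 2} (resp.~\eqref{binomial small 4}) to bound the ``$E'$'' event for a tree with $d-1$ children at the root, exactly as in the proof of Lemma~\ref{path rigid}.
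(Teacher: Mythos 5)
Your proposal is correct and follows exactly the route the paper intends (the paper omits this proof, noting it is nearly identical to Lemma~\ref{rigid subtree} with Lemma~\ref{binomial small theta} replacing Lemma~\ref{cdf bounds}): you correctly identify the offspring success probability as $1-2/\kappa$ for strong rigidity, the $(\theta-1)$-fort threshold of $d-\theta+1$ good children, the resulting fixed-point function $B_3$, and the application of~\eqref{binomial small 1} and~\eqref{binomial small 3} at $x_0=p$ and $x_0=q$ respectively.
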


\begin{lemma}\label{path strongly rigid}
If $d\ge\theta\ge 3$ and $\kappa(\theta-2)\ge 9e d^{1 + \frac1{\theta-2}}$ or if $d\ge \theta=2$ and $\kappa\ge 12d^3$, then with probability 1 every infinite non-backtracking path in $T_d$ starting at $\rho$ will intersect a strongly rigid $(\theta-1)$-fort. 
\end{lemma}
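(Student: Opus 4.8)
The plan is to mimic the structure of Lemma~\ref{path rigid}, with strongly rigid $(\theta-1)$-forts playing the role of rigid $(\theta-2)$-forts, and with Lemma~\ref{strongly rigid subtree} and Lemma~\ref{binomial small theta} replacing Lemma~\ref{rigid subtree} and Lemma~\ref{cdf bounds}. First I would record the two base estimates: by Lemma~\ref{strongly rigid subtree}, the event $E$ that a full $d$-ary tree contains no root-containing strongly rigid $(\theta-1)$-fort satisfies $\mathbb{P}(E)\le p$ in the case $d\ge\theta\ge 3$ (and $\mathbb{P}(E)\le q$ when $\theta=2$), where $p=\frac{1}{3e}d^{-(\theta-1)/(\theta-2)}$. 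Next, as in Lemma~\ref{path rigid}, I would introduce the event $E'$ that a rooted tree with $d-1$ children at the root and $d$ children elsewhere contains no strongly rigid subtree $S$ through the root with $(d-\theta+2)$ children of the root and $(d-\theta+1)$ children at every other vertex of $S$ --- wait, one must be careful here: for a \emph{strongly} rigid fort, the parent edge also counts, so a vertex of $S$ may have at most $(\theta-1)$ neighbours outside $S$ \emph{including} its parent, hence at most $(\theta-2)$ non-parent neighbours outside, i.e.\ at least $d-(\theta-2)=d-\theta+2$ children in $S$; for the root of the $(d-1)$-child tree, at least $d-1-(\theta-1)=d-\theta$ children in $S$. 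Plugging the bound $\mathbb{P}(E)\le p$ into the offspring success probability $(1-\mathbb{P}(E))(1-2/\kappa)$ and monotonicity of the binomial cdf in its parameter, the estimate~\eqref{binomial small 2} of Lemma~\ref{binomial small theta} gives $\mathbb{P}(E')\le \tfrac{2}{3d}$ (and~\eqref{binomial small 4} gives $\mathbb{P}(E')\le \tfrac{1}{3d^2}$ when $\theta=2$).

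Second, I would run the path argument verbatim from Lemma~\ref{path rigid}: fix a non-backtracking path $(v_\ell)_0^n$ from $\rho$, define $F_k$ to be the event that $v_k$ lies in a strongly rigid $(\theta-1)$-fort disjoint from $\{v_\ell:\ell\ne k\}$, observe the $F_k$ are independent for a fixed path (they depend on disjoint sets of colour-variables, since a strongly rigid fort rooted below $v_k$ avoiding the other path vertices uses only the subtrees hanging off $v_k$ away from the path), note $\mathbb{P}(F_k)\ge 1-\mathbb{P}(E')$ for $1\le k\le n-1$ and $\mathbb{P}(F_0)=\mathbb{P}(F_n)\ge 1-\mathbb{P}(E)$, and conclude that the probability the path misses every strongly rigid $(\theta-1)$-fort is at most $\mathbb{P}(E)^2\mathbb{P}(E')^{n-1}$. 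A union bound over the at most $(d+1)d^{n-1}$ such paths of length $n$, followed by Borel--Cantelli, yields that almost surely every infinite non-backtracking path from $\rho$ intersects a strongly rigid $(\theta-1)$-fort, provided the resulting series $\sum_n (d+1)d^{n-1}\,\mathbb{P}(E)^2\mathbb{P}(E')^{n-1}$ converges; with $\mathbb{P}(E')\le\tfrac{2}{3d}$ this is $\sum_n (d+1)d^{n-1}(2/(3d))^{n-1}p^2 = (d+1)p^2\sum_n (2/3)^{n-1}<\infty$, and similarly in the $\theta=2$ case with $\mathbb{P}(E')\le\tfrac{1}{3d^2}$ we get a convergent geometric series with ratio $1/(3d)$.

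For the $d\ge\theta=2$ case I would, as the excerpt says it wants only a sketch, spell out the one structural change: a strongly rigid $1$-fort $S$ requires each vertex of $S$ to have at most one neighbour outside $S$, so if $v\in S$ is not the root then its parent already accounts for that one outside-neighbour and \emph{all} $d$ children of $v$ must be in $S$; equivalently, a strongly rigid $1$-fort through $\rho$ is (the root together with) a full $d$-ary strongly rigid subtree rooted at one of $\rho$'s children, which is exactly what the Galton--Watson computation underlying $B(x)=\mathbb{P}(\mathrm{Binom}(d,(1-x)(1-2/\kappa))\le d-1)$ and the bounds~\eqref{binomial small 3}--\eqref{binomial small 4} handle. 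The rest of the path argument is then identical.

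The main obstacle I anticipate is purely bookkeeping: correctly tracking how the parent edge is counted in the "strongly rigid" versus "rigid" definitions, so that the fort parameters ($\theta-1$ vs.\ $\theta-2$) and the corresponding binomial cdf cutoffs ($d-\theta+1$, $d-\theta+2$, etc.) line up with exactly the inequalities proved in Lemma~\ref{binomial small theta}, and verifying the independence of the $F_k$'s in the strongly-rigid setting (the subtlety being that strong rigidity constrains pairs of adjacent vertices symmetrically rather than only parent$\to$child, but since distinct $v_k$ on a non-backtracking path have disjoint "away-from-path" subtrees, the disjointness required in the definition of $F_k$ still makes these events functions of disjoint collections of initial colours). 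Once those are pinned down, no new ideas beyond those in the proof of Lemma~\ref{path rigid} are needed.
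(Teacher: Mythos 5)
For $d\ge\theta\ge 3$ your argument is essentially the paper's: bound $\mathbb P(E)$ by Lemma~\ref{strongly rigid subtree}, bound $\mathbb P(E')$ by \eqref{binomial small 2}, and run the single-vertex events $F_k$ through the product bound, union bound and Borel--Cantelli exactly as in Lemma~\ref{path rigid}. One bookkeeping remark: your \emph{first} description of $E'$ (at least $d-\theta+2$ children of the root in $S$, at least $d-\theta+1$ children of every other vertex of $S$) was correct, and your subsequent ``correction'' is wrong on both counts --- a non-root vertex of a connected fort containing the root has its parent \emph{inside} $S$, so it needs only $d-\theta+1$ of its $d$ children in $S$, while the root of the $(d-1)$-child tree represents an interior path vertex with two path-neighbours already outside the fort, so it needs $d-\theta+2$ of its remaining $d-1$ neighbours in $S$. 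Since the cutoff $d-\theta+1$ in \eqref{binomial small 2}, which you ultimately invoke, matches the correct count, this slip does not damage the $\theta\ge3$ case.

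The genuine gap is in the case $\theta=2$, where you assert that ``the rest of the path argument is then identical.'' It is not, and cannot be made so: for $\theta=2$ the event that an interior path vertex $v_k$ ($1\le k\le n-1$) lies in a strongly rigid $1$-fort disjoint from $\{v_\ell:\ell\neq k\}$ is \emph{empty}, because such a $v_k$ has both $v_{k-1}$ and $v_{k+1}$ outside the fort and hence at least two neighbours outside, contradicting the $1$-fort condition. Your product bound then degenerates to $\mathbb P(E)^2$ with no decay in $n$, and the union bound over the roughly $(d+1)d^{n}$ paths diverges. The paper's fix is a different decomposition: it groups the path into consecutive pairs $\{v_{2k-1},v_{2k}\}$ and takes $F_k$ to be the event that the \emph{pair} lies in a common strongly rigid $1$-fort disjoint from the rest of the path; each member of the pair then has only one path-neighbour outside the fort, and the event additionally requires the edge $\{v_{2k-1},v_{2k}\}$ to be strongly rigid, giving $\mathbb P(F_k)\ge\left(1-\tfrac2\kappa\right)\left(1-\mathbb P(E')\right)^2$ and a per-pair failure probability at most $\tfrac{2}{\kappa}+2\mathbb P(E')\le\tfrac{5}{6d^2}$, which beats the two factors of $d$ consumed per pair in the union bound (this is also where the hypothesis $\kappa\ge 12d^3$ is actually used). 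Your structural description of a $1$-fort is likewise off: a strongly rigid $1$-fort through the root of $\mathcal T_d$ requires at least $d-1$ children of \emph{every} vertex, including the root, to lie in the fort, not a single full $d$-ary branch. Without the pairing idea the $\theta=2$ half of the lemma is unproved.
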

\begin{proof}
The proof for $d\ge\theta\ge 3$ is similar to that of Lemma~\ref{path rigid}. When $d\ge \theta=2$, we must modify the the proof of Lemma~\ref{path rigid} as follows. 
Let $E$ be the event in Lemma~\ref{strongly rigid subtree}. In an infinite rooted tree with $d-1$ children at the root and $d$ children everywhere else, we let $E'$ be the event that there does not exist a strongly rigid $(d-1)$-ary subtree $S$ that contains the root. Then
\begin{equation*}
\mathbb P (E') =\mathbb P\left(\text{Binom}\left(d-1, (1-\mathbb P(E))\left(1-2/\kappa\right)\right) \le d-2\right) \leq \frac{1}{3d^2}
\end{equation*}
by Lemmas~\ref{binomial small theta} and~\ref{strongly rigid subtree}.

Let $(v_\ell)_0^{n}$ be a non-backtracking path in $T_d$ of length $n$ such that $v_0=\rho$, and define the events 
$$
F_k := \{v_{2k-1}\text{ and } v_{2k}  \text{ are in a strongly rigid $1$-fort disjoint from } \{v_\ell: \ell\notin \{2k-1, 2k\} \}\}
$$
 for $k=1, \ldots, \lfloor n/2 \rfloor$. For fixed $n$, note that $F_1, \ldots, F_{\lfloor n/2 \rfloor}$ are independent. Additionally, $\mathbb P(F_k) \geq(1-\frac{2}{\kappa})\left[1 - \mathbb P(E')\right]^2 \ge 1-\frac{2}{\kappa} - 2P(E') $ for $k=1, \ldots, \lfloor n/2 \rfloor$, since $F_k$ occurs if $v_{2k-1}$ and $v_{2k}$ are the roots of strongly rigid $(d-1)$-ary subtrees within the forest obtained by removing the edges along the path $(v_\ell)$ and the edge $\{v_{2k-1}, v_{2k}\}$ is strongly rigid.  So we have
\begin{align*}
\mathbb P((v_k)_0^n &\text{ does not intersect a strongly rigid $1$-fort})\\
&\leq \mathbb P \left(\bigcap_{k=1}^{\lfloor n/2 \rfloor} F_k^c\right) =\prod_{k=1}^{\lfloor n/2 \rfloor}  \left[1 - \mathbb P(F_k) \right] \leq  \left (\frac{2}{\kappa} + 2\mathbb P(E')\right)^{\lfloor n/2 \rfloor} \leq  \left(\frac{5}{6d^{2}}\right)^{(n-1)/2}.
\end{align*}
Let $E_n$ be the event that at least one such path of length $n$ does not intersect a rigid $1$-fort. Then
$$\mathbb P(E_n) \leq (d+1)d^n \cdot \left(\frac{5}{6d^{2}}\right)^{(n-1)/2} \leq (d+1)d \left(\sqrt{\frac{5}{6}}\right)^{n-1}$$
and so 
$$ \sum_{n=1}^\infty \mathbb P(E_n)  \leq \sum_{n=1}^\infty(d+1)d \left(\sqrt{\frac{5}{6}}\right)^{n-1}< \infty.$$
Thus by the Borel-Cantelli lemma, all infinite non-backtracking paths will intersect a strongly rigid $1$-fort almost surely. 
\end{proof}

\begin{proof}[Proof of Theorem~\ref{thm:CCA small theta}]
The proof is identical to the proof of Theorem~\ref{thm:fixation}, but with Lemmas~\ref{lem:strongly rigid k-fort},~\ref{strongly rigid subtree} and~\ref{path strongly rigid} replacing Lemmas~\ref{lem:rigid k-fort},~\ref{rigid subtree} and~\ref{path rigid}.
\end{proof}

\subsection*{Fixation for GHM}
The method of the last section also yields fixation for GHM in the specified region of parameter space, but we now give a simple argument that yields a larger fixation region, at least for fixed $\theta$ and larger $d$. The first step is the following lemma, which gives a necessary condition for the root $\rho\in T_d$ to be excited at time $t$.

\begin{lemma}\label{lem:GHM excited}
Let $d\ge \theta\ge 2$ and $\kappa\ge 3$. For every $t\ge 1$, if $\gamma_t(\rho)=1$, then there exists a $\theta$-ary subtree $S\subseteq T_d$ rooted at $\rho$ such that
\begin{itemize}
\item $\gamma_0(v) = 1$ for all $v\in S$ at distance $t$ from $\rho$,
\item $\gamma_0(v) = 0$ for all $v\in S$ at distance $t-1$ from $\rho$, and
\item for $m\in \{1, \ldots, \kappa-2\}$, we have $\gamma_0(v) \in \{0,\kappa-1, \ldots, \kappa-m\}$ for all $v\in S$ at distance $t-1-m$ from $\rho$,
\end{itemize}
where the set of vertices at a negative distance from $\rho$ is empty.
\end{lemma}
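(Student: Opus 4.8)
The plan is to reconstruct, backwards in time, the ``cone of influence'' that forces $\gamma_t(\rho)=1$ and to prune it down to a perfect $\theta$-ary tree. Concretely, I would prove by induction on $j=0,1,\dots,t$ that there is an increasing chain $\{\rho\}=S_0\subseteq S_1\subseteq\cdots\subseteq S_t=:S$ of rooted subtrees of $T_d$, where $S_j$ is a perfect $\theta$-ary tree of depth $j$ rooted at $\rho$, and every $v\in S_j$ at distance $i$ from $\rho$ satisfies $\gamma_{t-i}(v)=1$ for each $i=0,\dots,j$. The base case $S_0=\{\rho\}$ is precisely the hypothesis $\gamma_t(\rho)=1$, and taking $j=t$ produces the tree $S$ in the statement, together with the relations $\gamma_{t-i}(v)=1$ for all $v\in S$ at distance $i\le t$ that we then read off.

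For the inductive step, fix a leaf $v$ of $S_j$ with $j<t$, so $\gamma_{t-j}(v)=1$ and $t-j\ge 1$. By the GHM update rule, to be excited at a positive time a vertex must have been at rest the previous step and have had at least $\theta$ neighbors in state $1$ then; so $\gamma_{t-j-1}(v)=0$ and $v$ has at least $\theta$ neighbors in state $1$ at time $t-j-1$. I want to take $\theta$ of these excited neighbors as the children of $v$ in $S_{j+1}$, and the only thing to verify is that none of them already lies in $S_j$. When $j=0$ this is automatic: $v=\rho$ has all $d+1\ge\theta$ neighbors available. When $j\ge 1$, the unique neighbor of $v$ in $S_j$ is its parent $p$, which sits at distance $j-1$, so $\gamma_{t-j+1}(p)=1$; the update rule then forces $\gamma_{t-j}(p)=0$, and a vertex equal to $0$ at a positive time was equal to $0$ or $\kappa-1$ one step earlier, so $\gamma_{t-j-1}(p)\in\{0,\kappa-1\}$, which is not $1$ since $\kappa\ge 3$. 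Hence $p$ is not among the excited neighbors of $v$ at time $t-j-1$, so at least $\theta$ of the $d$ children of $v$ in $T_d$ are in state $1$ at that time; choosing $\theta$ of them for each leaf $v$, and noting that distinct leaves of $S_j$ have disjoint children in $T_d$ and that all new vertices sit at distance $j+1$, gives a perfect $\theta$-ary $S_{j+1}$ of depth $j+1$ with the required coloring property. I expect this last point — guaranteeing that the parent $p$ is not itself excited at the instant its child must recruit its own children — to be the main obstacle, since it is exactly what lets the temporal cone be realized as an honest subtree of $T_d$ rather than a walk with back-edges.

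Finally I would translate $\gamma_{t-i}(v)=1$ into the three bullets. For $i=t$ this is literally $\gamma_0(v)=1$, the first bullet. For $i=t-1$ it reads $\gamma_1(v)=1$, and the update rule forces $\gamma_0(v)=0$, the second bullet. For the intermediate levels I would prove the elementary fact that if $\gamma_s(v)=1$ with $1\le s\le\kappa-1$ then $\gamma_0(v)\in\{0\}\cup\{\kappa-s+1,\dots,\kappa-1\}$: if $\gamma_0(v)=c\ge 1$ then $v$ increments without interruption, so $\gamma_r(v)=c+r$ for $0\le r\le\kappa-1-c$ and $\gamma_{\kappa-c}(v)=0$, whence the first time $r\ge 1$ with $\gamma_r(v)=1$ is at least $\kappa-c+1$, forcing $c\ge\kappa-s+1$. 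Applying this with $s=m+1$ at distance $i=t-1-m$ for $m\in\{1,\dots,\kappa-2\}$ yields exactly $\gamma_0(v)\in\{0,\kappa-1,\dots,\kappa-m\}$, the third bullet. Levels at distance less than $t-\kappa+1$ carry no constraint (the set above becomes all of $\{0,\dots,\kappa-1\}$), which is why $m$ ranges only up to $\kappa-2$, and every reference to vertices at negative distance from $\rho$ is vacuous, which handles small values of $t$.
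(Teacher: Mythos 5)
Your proposal is correct and follows essentially the same route as the paper: a backward-in-time induction that builds the $\theta$-ary tree level by level, with the crucial observation that the parent of an excited vertex was at rest one step earlier (hence in state $0$ or $\kappa-1$, not $1$, two steps earlier), so the $\theta$ excited neighbors can be taken among the children. The only (cosmetic) difference is that the paper carries the full three-part invariant on $\gamma_{t-s}$ through the induction, whereas you carry only $\gamma_{t-i}(v)=1$ and recover the time-zero refractory constraints afterward from the deterministic incrementing of nonzero states; both are valid.
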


\begin{proof}

Given $\gamma_0$ and $t\ge 1$ such that $\gamma_t(\rho)=1$, we will construct $S$ up to level $t$, and confirm that it satisfies the following stronger conditions. For each $0\le s\le t$ we have
\begin{itemize}
\item $\gamma_{t-s}(v) = 1$ for all $v\in S$ at distance $s$ from $\rho$,
\item $\gamma_{t-s}(v) = 0$ for all $v\in S$ at distance $s-1$ from $\rho$, and
\item for $m\in \{1, \ldots, \kappa-2\}$, we have $\gamma_{t-s}(v) \in \{0,\kappa-1,\kappa-2, \ldots, \kappa-m\}$ for all $v\in S$ at distance $s-1-m$ from $\rho$.
\end{itemize}

We start by including $\rho$ in $S$, and note that $S$ will satisfy the conditions above for $s=0$, which reduce to $\gamma_t(\rho)=1$.  For $s=1$, since $\gamma_t(\rho)=1$, it must be that $\rho$ has at least $\theta$ neighbors that are in state $1$ at time $t-1$, and $\rho$ must be in state $0$ at time $t-1$. We therefore include in $S$ the first $\theta$ of these neighbors according to some (arbitrary) ordering on the vertices of $T_d$. Also, this confirms that $S$ satisfies the conditions above for $s=1$ and $s=0$. Now we argue inductively, assuming we have constructed $S$ up to level $1\le r <t$, and the conditions above are satisfied for all $s\le r$. For each $v\in S$ at distance $r$ from $\rho$, we have $\gamma_{t-r}(v)=1$ and its parent $w\in S$, which is at distance $r-1\ge 0$ from $\rho$ has $\gamma_{t-r}=0$. Therefore, $v$ must have at least $\theta$ children in $T_d$ that are in state $1$ at time $t-(r+1)$, and we include the first $\theta$ of these in $S$ for each such $v$. Moreover, we must have $\gamma_{t-(r+1)}(v)=0$. This verifies $S$ satisfies the first two conditions for $s=r+1$. For the third condition, note that for each $u\in S$ at distance $r-1-m \ge 0$ from $\rho$, we have $\gamma_{t - (r-m)}(u) = 0$ by the induction hypothesis. If it were the case that $\gamma_{t-r}(u) \in \{1, 2, \ldots, \kappa-m-1\}$, then we would have $\gamma_
{t-(r-m)}(u) \in \{1+m, 2+m, \ldots, \kappa-1\}$, and in particular, $\gamma_{t-(r-m)}(u) \neq 0$, a contradiction. This confirms the third condition. The construction of $S$ beyond level $t$ can be done arbitrarily, so this completes the proof.
\end{proof}

We next need a bound on the number of $\theta$-ary subtrees of depth $t$ in $T_d$.
\begin{lemma}\label{lem:subtree bound}
Let $d\ge \theta\ge 2$. The number of full $\theta$-ary subtrees of depth $t\ge 1$ that are rooted at $\rho$ in $T_d$ is
$$
{d+1 \choose \theta}  {d\choose \theta}^{\frac{\theta^t - \theta}{\theta-1}}.
$$
\end{lemma}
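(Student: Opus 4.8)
The plan is to count these subtrees by constructing them one level at a time, exploiting the fact that $T_d$ is acyclic so that the choices made at different vertices never interfere with one another.

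First I would observe that specifying a full $\theta$-ary subtree $S$ of depth $t$ rooted at $\rho$ is the same as specifying: (i) which $\theta$ of the $d+1$ neighbors of $\rho$ in $T_d$ are the children of $\rho$ in $S$; and (ii) for each vertex $v\in S$ at depth $s$ with $0\le s\le t-1$ that has already been placed, which $\theta$ of the neighbors of $v$ in $T_d$ other than its parent are the children of $v$ in $S$. Because $T_d$ is a tree, each non-root vertex $v$ has exactly $d$ neighbors besides its parent, and the regions of $T_d$ reachable below distinct vertices of $S$ are pairwise disjoint; hence these selections may be made independently, and distinct sequences of selections produce distinct subtrees $S$. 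This bijection between subtrees and selection-sequences is the only structural point that needs care, and it follows immediately from acyclicity of $T_d$; I expect no genuine obstacle beyond stating it cleanly.

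Next I would carry out the count. The choice at $\rho$ contributes a factor ${d+1\choose \theta}$. A full $\theta$-ary tree of depth $t$ has exactly $\theta^s$ vertices at depth $s$, and a choice of $\theta$ children (from $d$ available neighbors, a factor ${d\choose \theta}$ each) must be made at every vertex of depths $1,2,\ldots,t-1$, while the depth-$t$ vertices are leaves and require no choice. Therefore the number of subtrees equals
$$ {d+1\choose \theta}\,{d\choose \theta}^{\,\theta+\theta^2+\cdots+\theta^{t-1}}. $$
Finally I would evaluate the geometric sum $\sum_{s=1}^{t-1}\theta^s=\theta\cdot\frac{\theta^{t-1}-1}{\theta-1}=\frac{\theta^t-\theta}{\theta-1}$, which yields the stated formula; the case $t=1$ is consistent since the exponent is then $0$ and there are exactly ${d+1\choose \theta}$ depth-$1$ subtrees. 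The hypothesis $d\ge\theta$ is what guarantees ${d\choose \theta}\ge 1$, so that such subtrees exist and the count is non-vacuous.
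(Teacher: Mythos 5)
Your proposal is correct and follows essentially the same argument as the paper: choose the $\theta$ children of $\rho$ in ${d+1\choose\theta}$ ways, then independently choose $\theta$ of the $d$ non-parent neighbors at each of the $\theta^s$ vertices of depth $s$ for $1\le s\le t-1$, and sum the geometric series. The paper phrases this as a recursion $M_{s+1}=M_s{d\choose\theta}^{\theta^s}$, but the underlying count is identical.
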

\begin{proof}
Let $M_t$ denote the number of full $\theta$-ary subtrees of depth $t\ge 1$ rooted at $\rho$. There are ${d+1 \choose \theta}$ ways to choose the children of $\rho$ in a $\theta$-ary subtree, so  $M_1= {d+1 \choose \theta}$. For each non-leaf, non-root vertex in the subtree, there are ${d\choose \theta}$ ways to choose its children, and there are $\theta^s$ vertices in the subtree at depth $s$. Therefore, for $s\ge 1$,
$$
M_{s+1} = M_s {d\choose \theta}^{\theta^s}.
$$
It follows that
$$
M_t = {d+1 \choose \theta}  {d\choose \theta}^{\theta + \theta^2 + \cdots + \theta^{t-1}},
$$
which is equal to the claimed formula.
\end{proof}

\begin{proof}[Proof of Theorem~\ref{thm:GHM small theta}]
For $t\ge \kappa$, by the union bound and Lemmas~\ref{lem:GHM excited} and~\ref{lem:subtree bound}, we have
\begin{equation}\label{eq:GHM excited bound}
\begin{aligned}
\mathbb{P}(\gamma_t(\rho)=1) &\le {d+1 \choose \theta}  {d\choose \theta}^{\theta^t/(\theta-1)} \times \left(\frac1\kappa\right)^{\theta^t+\theta^{t-1}}  \left(\frac2\kappa\right)^{\theta^{t-2}}  \left(\frac3\kappa\right)^{\theta^{t-3}} \cdots  \left(\frac{\kappa-1}\kappa\right)^{\theta^{t-\kappa+1}} \\
& \le {d+1 \choose \theta} \left[ \left(\frac{de}{\theta}\right)^{\theta^\kappa/(\theta-1)} \frac{2^{\theta^{\kappa-3}} 3^{\theta^{\kappa-4}} \cdots (\kappa-1)  } { \kappa^{(\theta^\kappa-1)/(\theta-1)} } \right]^{\theta^{t-\kappa+1}},
\end{aligned}
\end{equation}
since ${d\choose \theta}\le (de/\theta)^\theta$. We bound the product in the numerator by
\begin{equation}\label{product bound}
\begin{aligned}
\left[2^{\theta^{\kappa-3}} 3^{\theta^{\kappa-4}} \cdots (\kappa-1)\right]^{(\theta-1)/(\theta^{\kappa}-1)} &\le \left[2^{\theta^{\kappa-3}} 3^{\theta^{\kappa-4}} \cdots (\kappa-1)\right]^{1/\theta^{\kappa-1}}\\
&\le \prod_{k=2}^\infty k^{1/\theta^{k}}\\
& = \exp\left[ \sum_{k=2}^\infty \frac1{\theta^k} \ln(k)\right]\\
&\le  \exp\left[ \sum_{k=2}^\infty \frac1{\theta^k} \frac{k}{2} \right]\\
&= \exp\left[\frac{2\theta  - 1}{2\theta(\theta-1)^2}\right]\\
& \le e^{3/4}.
\end{aligned}
\end{equation}
Therefore, from~\eqref{eq:GHM excited bound}, we have that for $t\ge \kappa$,
$$
\mathbb{P}(\gamma_t(\rho)=1) \le {d+1 \choose \theta} \left[ \left(\frac{de}{\theta}\right)^{\theta^\kappa/(\theta-1)} \left(\frac{ e^{3/4}} { \kappa }\right)^{(\theta^\kappa-1)/(\theta-1)} \right]^{\theta^{t-\kappa+1}} \le {d+1 \choose \theta} e^{-\theta^{t-\kappa+1}},
$$
whenever $\kappa\ge e\left(\frac{de}{\theta}\right)^{1+ 1/(\theta^\kappa - 1)}$ and $\theta\ge 2$ and $\kappa\ge 3$. It follows that for $t\ge \kappa$, 
$$
\begin{aligned}
\mathbb{P}(\gamma_s(\rho)=1 \text{ for some $s\ge t$}) &\le {d+1 \choose \theta} \sum_{s\ge t} e^{-\theta^{s-\kappa+1}} \\
&\le {d+1 \choose \theta} e^{-\theta^{t-\kappa+1}}
 \sum_{s\ge 0} e^{-s} \\
 &\le 2{d+1 \choose \theta} e^{-\theta^{t-\kappa+1}}.
 \end{aligned}
$$
This gives the claimed tail bound on $\tau$, and implies fixation.

When $\theta=2, 3$, we use the bounds ${d\choose 2} \le d^2/2$ or ${d\choose 3}\le d^3/6$ in~\eqref{eq:GHM excited bound} (instead of ${d\choose \theta}\le (de/\theta)^\theta$), and repeat the same argument to get fixation for $\kappa > e^{3/4} \left(\frac{d}{\sqrt{2}}\right)^{1+1/(2^{\kappa}-1)}$ when $\theta=2$, and for $\kappa > e^{5/24} \left(\frac{d}{\sqrt[3]{6}}\right)^{1+1/(3^{\kappa}-1)}$ when $\theta=3$. In the case $\theta=3$, we used the penultimate line of~\eqref{product bound} to obtain the exponent $\frac{2\theta-1}{2\theta(\theta-1)^2} = \frac5{24}$. It is now straightforward to verify the bounds on $\kappa$ which guarantee fixation for $d$ given in the table. Moreover, when $d=\theta$, we have ${d\choose \theta}=1$, and it is easy to check from the first inequality in~\eqref{eq:GHM excited bound} that fixation occurs for all $\kappa$.  
\end{proof}

 \section{Open questions}
We suggest a few directions for further research into CCA and GHM on trees. The first natural question is what happens for the small values of $\kappa, d$ and $\theta$ not covered by our theorems. 
\begin{question}
Does fluctuation or fixation occur for GHM (or CCA) when $d=3$, $\theta=2$ and $\kappa=3, 4$? What about $d=4$, $\theta=2$ and $\kappa\le 6$?
\end{question}
A second question is whether there are other possibilities besides fluctuation or fixation.
\begin{question}
For GHM or CCA, do there exist values of $d\ge \theta\ge 2$ and $\kappa\ge 3$ such that 
$$
\mathbb{P}(\text{the state of } \rho \text{ changes infinitely often})\in (0,1)?
$$
\end{question}
In our proof of fluctuation, we in fact show that every vertex is eventually periodic with period $\kappa$. However, when $\theta=1$ and $\kappa=3$, \cite{gravner2018} show that the asymptotic rate of fluctuation is less than one for many infinite trees.
\begin{question}
Do there exist $d\ge\theta\ge 2$ and $\kappa\ge 3$ such that fluctuation occurs almost surely, but with positive probability $\rho$ is not eventually $\kappa$-periodic?
\end{question}
Finally, one could investigate all of these questions for non-uniform initial coloring distributions or on non-regular trees.


\bibliography{AutomataOnTd}

\begin{thebibliography}{10}

\bibitem{balogh_peres_pete_2006}
J{\'o}zsef Balogh, Yuval Peres, and G{\'a}bor Pete.
\newblock Bootstrap percolation on infinite trees and non-amenable groups.
\newblock {\em Combinatorics, Probability and Computing}, 15(5):715--730, 2006.

\bibitem{BandG89}
Maury Bramson and David Griffeath.
\newblock Flux and fixation in cyclic particle systems.
\newblock {\em The Annals of Probability}, 17(1):26--45, 1989.

\bibitem{Durrett:1992}
Richard Durrett.
\newblock Multicolor particle systems with large threshold and range.
\newblock {\em Journal of Theoretical Probability}, 5(1):127--152, 1992.

\bibitem{DG:1993}
Richard Durrett and David Griffeath.
\newblock Asymptotic behavior of excitable cellular automata.
\newblock {\em Experimental Mathematics}, 2(3):183--208, 1993.

\bibitem{Durrett:1991ql}
Richard Durrett and Jeffrey~E. Steif.
\newblock Some rigorous results for the greenberg-hastings model.
\newblock {\em Journal of Theoretical Probability}, 4(4):669--690, 1991.

\bibitem{Fisch1995}
R.~Fisch and Janko Gravner.
\newblock One-dimensional deterministic greenberg-hastings models.
\newblock {\em Complex Syst.}, 9, 1995.

\bibitem{Fisch:1990gl}
Robert Fisch.
\newblock The one-dimensional cyclic cellular automaton: A system with
  deterministic dynamics that emulates an interacting particle system with
  stochastic dynamics.
\newblock {\em Journal of Theoretical Probability}, 3(2):311--338, 1990.

\bibitem{Fisch:1992}
Robert Fisch.
\newblock {Clustering in the One-Dimensional Three-Color Cyclic Cellular
  Automaton}.
\newblock {\em The Annals of Probability}, 20(3):1528 -- 1548, 1992.

\bibitem{Fisch1991}
Robert Fisch, Janko Gravner, and David Griffeath.
\newblock {\em Cyclic Cellular Automata in Two Dimensions}, pages 171--185.
\newblock Birkh{\"a}user Boston, Boston, MA, 1991.

\bibitem{Fisch:1991km}
Robert Fisch, Janko Gravner, and David Griffeath.
\newblock Threshold-range scaling of excitable cellular automata.
\newblock {\em Statistics and Computing}, 1(1):23--39, 1991.

\bibitem{Fisch1993}
Robert Fisch, Janko Gravner, and David Griffeath.
\newblock Metastability in the greenberg-hastings model.
\newblock {\em The Annals of Applied Probability}, 3(4):935--967, 1993.

\bibitem{Foxall:2018pi}
Eric Foxall and Hanbaek Lyu.
\newblock Clustering in the three and four color cyclic particle systems in one
  dimension.
\newblock {\em Journal of Statistical Physics}, 171(3):470--483, 2018.

\bibitem{gravner2018}
Janko Gravner, Hanbaek Lyu, and David Sivakoff.
\newblock Limiting behavior of 3-color excitable media on arbitrary graphs.
\newblock {\em Ann. Appl. Probab.}, 28(6):3324--3357, 12 2018.

\bibitem{Greenberg1978}
J.~M. Greenberg and S.~P. Hastings.
\newblock Spatial patterns for discrete models of diffusion in excitable media.
\newblock {\em SIAM Journal on Applied Mathematics}, 34(3):515--523, 1978.

\bibitem{Lyu201528}
Hanbaek Lyu.
\newblock Synchronization of finite-state pulse-coupled oscillators.
\newblock {\em Physica D: Nonlinear Phenomena}, 303:28 -- 38, 2015.

\bibitem{LYU20191132}
Hanbaek Lyu and David Sivakoff.
\newblock Persistence of sums of correlated increments and clustering in
  cellular automata.
\newblock {\em Stochastic Processes and their Applications}, 129(4):1132--1152,
  2019.

\end{thebibliography}
\bibliographystyle{plain}
 
\end{document}